\newcommand{\N}{\mathbb{N}}
\newcommand{\R}{\mathbb{R}}
\newcommand{\Z}{\mathbb{Z}}
\newcommand{\C}{\mathbb{C}}
\newcommand{\Q}{\mathbb{Q}}
\newcommand{\Ha}{\mathbb{H}}
\begin{document}

\markboth{R. Moy}
{Congruences Among Power Series Coefficients of Modular Forms}

%
\catchline{}{}{}{}{}
%

\title{CONGRUENCES AMONG POWER SERIES COEFFICIENTS OF MODULAR FORMS
}

\author{RICHARD MOY}

\address{Department of Mathematics, Northwestern University\\
Evanston, IL 60208, USA\\
\email{ramoy88@math.northwestern.edu}}

\maketitle

\begin{abstract}
Many authors have investigated the congruence relations amongst the coefficients of power series expansions of modular forms $f$ in modular functions $t$. In a recent paper, R. Osburn and B. Sahu examine several power series expansions and prove that the coefficients exhibit congruence relations similar to the congruences satisfied by the Ap\'ery numbers associated with the irrationality of $\zeta(3)$. We show that many of the examples of Osburn and Sahu are members of infinite families.
\end{abstract}

\keywords{Coefficients of power series; congruences; modular forms.}

\ccode{Mathematics Subject Classification 2000: 11F33, 11F11}

\section{Introduction}
The authors in \cite{CCS,CKKO,JV,OS1,OS2,V} investigate congruences amongst the power series coefficients of a modular form $f$ in a modular function $t$. These congruence relations are similar to those satisfied by the Ap\'ery numbers associated with the irrationality of $\zeta(3)$. In \cite{PS}, Peters and Stienstra show that the Ap\'ery numbers
\[
\alpha_n\mathrel{\mathop:}=\sum_{j=0}^n {\binom{n}{j}^2 \binom{n+j}{j} ^2}
\]
arise as coefficients of the power series expansion of
\[
f(z)=\frac{\eta^7(2z)\eta^7(3z)}{\eta^5(z)\eta^5(6z)}
\]
in
\[
t(z)=\left(\frac{\eta(z)\eta(6z)}{\eta(2z)\eta(3z)}\right)^{12},
\]
where
\[
\eta(z)=q^{1/24}\prod_{n\geq 1} {(1-q^n)},\ (q\mathrel{\mathop:}=e^{2\pi i  z})
\]
is Dedekind's eta function. The following two examples are discussed by Osburn and Sahu in \cite{OS1}.

\subsection{Example 1}
Let
\[
f=\frac{\eta^5(z)}{\eta(5z)}=1 - 5q + 5q^2 + 10q^3 - 15q^4 - 5q^5 - 10q^6 + 30q^7 + 25q^8 + \dots
\]
and let
\[
t=\frac{\eta^6(5z)}{\eta^6(z)}=q + 6q^2 + 27q^3 + 98q^4 + 315q^5 + 912q^6 + 2456q^7 + 6210q^8 \dots.
\]
Write
\[
f=\sum_{n=0}^\infty {b_nt^n}=1 -5t+ 35t^2 -275t^3+ 2275t^4 -19255t^5+ 163925t^6 -1385725t^7+ 11483875t^8+ \dots.
\]
For all primes $p$, we have $b_{\ell p^r}\equiv b_{\ell p^{r-1}}\mod p^r$ for all $\ell,r\in\N$.
\subsection{Example 2}
Let
\[
f=\frac{\eta^4(z)\eta^4(3z)}{\eta^2(2z)\eta^2(6z)}=1 - 4q + 4q^2 - 4q^3 + 20q^4 - 24q^5 + 4q^6 - 32q^7 + 52q^8+\dots
\]
and let
\[
t=\frac{\eta^6(2z)\eta^6(6z)}{\eta^6(z)\eta^6(3z)}=q + 6q^2 + 21q^3 + 68q^4 + 198q^5 + 510q^6 + 1248q^7 + 2904q^8 +\dots.
\]
Write
\[
f=\sum_{n=0}^\infty {\tilde{b}_nt^n}=1 -4t +28t^2 -256t^3+ 2716t^4 -31504t^5+ 387136t^6 -4951552t^7+ 65218204t^8 +\dots.
\]
We have $\tilde{b}_n=(-1)^n\sum_{k=0}^n {{\binom{n}{k}}^2\binom{2k}{k}\binom{2(n-k)}{n-k}}$. For all primes $p\nmid 6$, we have \mbox{$\tilde{b}_{\ell p^r}\equiv \tilde{b}_{\ell p^{r-1}}\mod p^r$} for all $l,r\in \N$.
\newline
In this paper, we prove that the examples above and in \cite[Tables 2--3]{OS1} are all members of infinite families (which are given in Theorem \ref{theorem1} and Theorem \ref{theorem2}).

Fix an integer $N\in\N$ and a Dirichlet character $\epsilon$ modulo $N$. Let $
\Gamma_0(N)\mathrel{\mathop:}= \{ \bigl(\begin{smallmatrix} a & b \\ c & d\end{smallmatrix}\bigr)\in \textup{SL}_2(\Z)| c\equiv 0 \mod N\}
$. Let $M_0^!(\Gamma_0(N),\epsilon)$ and $\mathcal{M}_0(\Gamma_0(N),\epsilon)$ be respectively the space of weakly holomorphic modular functions and the space of meromorphic modular functions on $\Gamma_0(N)$ with character $\epsilon$. For an integer $k\in\Z$, let $M_k(\Gamma_0(N),\epsilon)$ be the space of modular forms on $\Gamma_0(N)$ with character $\epsilon$, and let $E_k(\Gamma_0(N),\epsilon)$ be its Eisenstein subspace.
Let
\[
\widetilde{M}_0^!(\Gamma_0(N),\epsilon)=M_0^!(\Gamma_0(N),\epsilon)\cap \Q\left[\left[q,q^{-1}\right]\right]
 \]
and let
\[
\widetilde{\mathcal{M}}_0(\Gamma_0(N),\epsilon)= \mathcal{M}_0(\Gamma_0(N),\epsilon)\cap \Q\left[\left[q,q^{-1}\right]\right].
\]
Similarly, let
\[
\widetilde{M}_k(\Gamma_0(N),\epsilon)=M_k(\Gamma_0(N),\epsilon)\cap \Q\left[\left[q\right]\right]
 \]
and let
\[
\widetilde{E}_k(\Gamma_0(N),\epsilon)=E_k(\Gamma_0(N),\epsilon)\cap \Q\left[\left[q\right]\right].
\]
For $k>0$, $N\in\N$, and $\epsilon$ a Dirichlet character modulo, we say that $M_k(\Gamma_0(N),\epsilon)$ satisfies \eqref{conditionstar} if
\begin{equation*}\label{conditionstar}
\dim \widetilde{M_{k+2}}(\Gamma_0(N),\epsilon)<\dim \widetilde{M_k}(\Gamma_0(N),\epsilon)+\dim \widetilde{E_k}(\Gamma_0(N),\epsilon).\tag{$\star$}
\end{equation*} For additional notation, see Section \ref{sect_prelim}.

\begin{theorem}\label{theorem1}
Choose an integer $ N$, a real-valued Dirichlet character $\chi$ modulo $N$, and a positive integer $k$ such that $\widetilde {M_k}(\Gamma_0(N),\chi)$ satisfies \eqref{conditionstar}. Let
\[
t=q+\sum_{n=2}^\infty{a_nq^n}\in \widetilde{M}_0^!(\Gamma_0(N),\psi)
\]
such that the zeros and poles of $t$ are supported at the cusps, where $\psi$ is any Dirichlet character modulo $N$. Then there exists a non-zero $f\in \widetilde{M}_k(\Gamma_0(N),\chi)$ such that when we write
\[
f=\sum_{n=0}^\infty {b_n t^n},
\]
we have
\[
b_{\ell p^r}\equiv b_{\ell p^{r-1}}\mod p^r
\]
for all but finitely many primes $p$ described in Table \ref{table_theorem1hypoth} and for all $\ell,r\in\N$.
\end{theorem}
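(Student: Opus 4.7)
My plan is to follow the Atkin--Stienstra--Beukers--Verrill template: translate the coefficient congruence into a $q$--expansion statement about the Atkin operator $U_p$ applied to a suitably chosen modular form, and use \eqref{conditionstar} to produce that form.

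Since $t = q + O(q^2)$ and the divisor of $t$ is supported on the cusps, $t$ is a uniformizer at the cusp $\infty$ of $X_0(N)$. The expansion $f = \sum_n b_n t^n$ therefore admits the residue description
\[
b_n = \mathrm{Res}_{q=0}\!\left(\frac{f}{t^{n+1}}\,dt\right),
\]
so the $b_n$ are determined by $q$--expansion data at $\infty$. Consequently, any $q$--expansion congruence of the shape $U_p f \equiv f \pmod{p^r}$ will translate, via the compatibility of $U_p$ with powers of the uniformizer $t$, into the coefficient congruence $b_{\ell p^r} \equiv b_{\ell p^{r-1}} \pmod{p^r}$.

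Hypothesis \eqref{conditionstar} is what produces the form $f$. The Serre derivative $\theta_k\colon g \mapsto \theta g - (k/12) E_2 g$ maps $\widetilde{M}_k(\Gamma_0(N),\chi) \to \widetilde{M}_{k+2}(\Gamma_0(N),\chi)$, and parallel to it there is a contribution of $\widetilde{E}_k$ into $\widetilde{M}_{k+2}$ coming from weight--$2$ Eisenstein multiplication. The strict inequality in \eqref{conditionstar} forces a nonzero kernel element of the combined map $\widetilde{M}_k \oplus \widetilde{E}_k \to \widetilde{M}_{k+2}$; extracting such an element yields $f \in \widetilde{M}_k(\Gamma_0(N),\chi)$ together with an Eisenstein series $E \in \widetilde{E}_k(\Gamma_0(N),\chi)$ linked by an identity
\[
\theta f = \tfrac{k}{12}\,\Phi\, f + c\, E
\]
for a distinguished weight--$2$ Eisenstein series $\Phi$ and a scalar $c$. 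In words, $f$ and $E$ have essentially the same logarithmic derivative.

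For all but finitely many primes $p$ (namely those listed in Table~\ref{table_theorem1hypoth}), classical Eisenstein congruences give $U_p \Phi \equiv \Phi$ and $U_p E \equiv E$ modulo $p^r$ for every $r$. Here the hypothesis that $\chi$ is real--valued enters crucially: it forces $\chi(p) \in \{\pm 1\}$, which keeps the relevant $U_p$--eigenvalue a $p$--adic unit congruent to $1$. Combining these Eisenstein congruences with the Serre--type identity above and Atkin's commutation $U_p \theta = p\,\theta U_p$ on $q$--expansions yields $U_p f \equiv f \pmod{p^r}$, which via the residue formula from the first step delivers the claimed congruence.

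The main obstacle, I expect, lies in the second step: extracting from the bare dimension count \eqref{conditionstar} a Serre--type identity clean enough that the final $U_p$--argument iterates to yield congruences modulo $p^r$ rather than merely modulo $p$. Controlling the contributions of cusps other than $\infty$, tracking denominators introduced by Lagrange inversion, and ensuring that $f$ does not reduce to zero modulo $p$ are what collectively force the finite exceptional set of primes recorded in Table~\ref{table_theorem1hypoth}.
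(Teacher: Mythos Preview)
Your proposal has the right ingredients but the architecture is off in two places, and as written it does not go through.

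First, the map you use to extract $f$ from \eqref{conditionstar} is the wrong one. The Serre derivative is independent of $t$, so the $f$ it produces has no reason to interact well with the chosen uniformizer; indeed your construction of $f$ never mentions $t$ at all. In the paper the relevant weight-raising map is multiplication by the logarithmic derivative $h := \dfrac{q\,\frac{dt}{dq}}{t}$, which lies in $\widetilde{M}_2(\Gamma_0(N))$ precisely because the divisor of $t$ is supported on the cusps. Condition \eqref{conditionstar} is then read as $\dim\widetilde{M}_k + \dim\widetilde{E}_{k+2} > \dim\widetilde{M}_{k+2}$, and a straight dimension count forces a nonzero $f\in\widetilde{M}_k(\Gamma_0(N),\chi)$ with $f\cdot h \in \widetilde{E}_{k+2}(\Gamma_0(N),\chi)$. (Your displayed identity $\theta f = \tfrac{k}{12}\Phi f + cE$ also has a weight mismatch: $E$ has weight $k$ while both other terms have weight $k+2$.)

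Second, the transfer from Eisenstein congruences to the $b_n$ congruences does not go via ``$U_p f \equiv f$''. There is no general compatibility between $U_p$ acting on $q$-expansions and the $t$-expansion of $f$; your claim about ``compatibility of $U_p$ with powers of the uniformizer $t$'' would require something like $U_p(t^n)$ being congruent to $t^{n/p}$, which is false. The actual mechanism is the equality of differential forms
\[
\sum_{n\ge 0} b_n\, t^{n-1}\,dt \;=\; f\,\frac{dt}{t} \;=\; (f\cdot h)\,\frac{dq}{q} \;=\; \sum_{n\ge 0} c_n\, q^{n-1}\,dq,
\]
where $\sum c_n q^n = f\cdot h$ is the Eisenstein series just constructed. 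The explicit divisor-sum formula for the Eisenstein basis gives $c_{\ell p^r}\equiv c_{\ell p^{r-1}}\pmod{p^r}$ directly (Lemma~\ref{lemma_eisensteincongr}); this is where the character conditions on $p$ in Table~\ref{table_theorem1hypoth} arise, through the factors $\chi_i(p)$ in the basis elements $E_{k+2,\chi_i,\psi_i}$. Proposition~\ref{JV_Lemma_Extended} then transfers the congruence from the $c_n$ to the $b_n$ under the change of local parameter $q\leftrightarrow t$. The finitely many excluded primes are those dividing the denominators of $t$, $f$, and the Eisenstein coefficients $\alpha_i/\beta_i$, not primes where $f$ might reduce to zero.
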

\begin{remark}
By dimension formulas \cite[p. 85-92]{DS}, condition \eqref{conditionstar} is satisfied by all integers $N$, Dirichlet characters $\chi$ modulo $N$, and positive integers $k$ in Table \ref{table_theorem1hypoth}.
\end{remark}

\begin{corollary}\label{theorem1_cor}
Choose an integer $N=2,3,5,7,$ or $13$, a real-valued Dirichlet character $\chi$ modulo $N$, and a positive integer $k$ such that
\[
\dim \widetilde{M_k}(\Gamma_0(N),\chi)=\dim \widetilde{M_{k+2}}(\Gamma_0(N),\chi).
\]
Let
\[
t=q+\sum_{n=2}^\infty{a_nq^n}\in \widetilde{M}_0^!(\Gamma_0(N),\psi)
\]
such that the zeros and poles of $t$ are supported at the cusps, where $\psi$ is any Dirichlet character modulo $N$. Then there exist non-zero $f_1,f_2\in \widetilde{M}_k(\Gamma_0(N),\chi)$ such that when we write
\[
f_1=\sum_{n=0}^\infty {b_{1,n} t^n},
\]
and
\[
f_2=\sum_{n=0}^\infty {b_{2,n} t^n},
\]
we have
\[
b_{1,\ell p^r}\equiv b_{1,\ell p^{r-1}}\mod p^r
\]
and
\[
b_{2,\ell p^r}\equiv \chi(p) b_{2,\ell p^{r-1}}\mod p^r
\]
for all but finitely many primes $p$ and for all $\ell,r\in\N$.
\end{corollary}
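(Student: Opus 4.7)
\emph{Proposal.} The plan is to derive both congruences from (refinements of) the argument behind Theorem~\ref{theorem1}, using that the corollary's hypothesis is strictly stronger than condition~\eqref{conditionstar}.

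First, I would check that the hypothesis implies \eqref{conditionstar}. For the prime, genus-zero levels $N\in\{2,3,5,7,13\}$, the dimension formulas \cite[p.~85--92]{DS} give $\dim\widetilde{E_k}(\Gamma_0(N),\chi)\geq 1$ whenever $\widetilde{M_k}(\Gamma_0(N),\chi)\neq 0$; in fact, for $k$ in the relevant range, this dimension equals the number of admissible cusps of $X_0(N)$, which is $2$. Hence $\dim\widetilde{M_{k+2}}=\dim\widetilde{M_k}<\dim\widetilde{M_k}+\dim\widetilde{E_k}$, so Theorem~\ref{theorem1} applies and produces $f_1$ directly, yielding the first congruence.

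For $f_2$, I would rerun the construction of Theorem~\ref{theorem1} with the target eigenvalue $1$ replaced by $\chi(p)$. The production of $f_1$ amounts to finding a non-zero element in a kernel measuring the failure of the Apéry-type congruence, namely an $f\in\widetilde{M_k}(\Gamma_0(N),\chi)$ with $U_p f\equiv f\pmod{p^r}$ in a suitable sense. The twisted analogue, seeking $U_p f\equiv \chi(p) f\pmod{p^r}$, is again a linear problem over $\Z/p^r\Z$, and the dimension count underlying Theorem~\ref{theorem1} carries through verbatim since the gap in \eqref{conditionstar} afforded by the corollary's hypothesis has size $\dim\widetilde{E_k}\geq 2$. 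Concretely, the two Eisenstein series $E_k(1,\chi)$ and $E_k(\chi,1)$ attached to the two cusps of $X_0(N)$ carry $T_p$-eigenvalues $1+\chi(p)p^{k-1}$ and $\chi(p)+p^{k-1}$, which reduce modulo $p$ to $1$ and $\chi(p)$ respectively; the first drives the construction of $f_1$ and the second that of $f_2$.

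The main obstacle is verifying that the linear-algebra step in Theorem~\ref{theorem1} genuinely adapts to the twisted eigenvalue and that the resulting $f_2$ is non-zero in $\widetilde{M_k}(\Gamma_0(N),\chi)$ with a power-series expansion in $t$. For $\chi$ trivial (e.g.\ $N=2$, or $\chi=1$ at any listed level) the twisted and untwisted congruences coincide and one may simply take $f_2=f_1$. For $\chi$ the non-trivial real character modulo~$N$, the two Eisenstein series above lie in distinct Hecke eigenspaces, so the adapted construction yields an $f_2$ not proportional to $f_1$; transferring the twisted $U_p$-congruence to the $t$-expansion via the same argument used for $f_1$ then gives $b_{2,\ell p^r}\equiv \chi(p) b_{2,\ell p^{r-1}}\pmod{p^r}$, completing the proof.
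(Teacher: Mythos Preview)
Your overall strategy---target the two specific Eisenstein series $E_{k+2,\chi_0,\chi}$ and $E_{k+2,\chi,\chi_0}$ and use them to build $f_1$ and $f_2$---matches the paper's. But two points need sharpening, and the second is a real gap.

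First, for $f_1$: invoking Theorem~\ref{theorem1} as a black box is not enough. Theorem~\ref{theorem1} only yields the congruence for primes satisfying the character conditions in Table~\ref{table_theorem1hypoth} (e.g.\ $\chi(p)=1$ when $\chi\neq\chi_0$), whereas the corollary claims it for \emph{all} but finitely many primes. The paper uses the equality $\dim\widetilde{M_k}=\dim\widetilde{M_{k+2}}$ to conclude that multiplication by $q\,\tfrac{dt}{dq}/t$ is a bijection, so one can arrange $f_1\cdot q\tfrac{dt}{dq}/t$ to equal the \emph{single} series $E_{k+2,\chi_0,\chi}(z)$; since here the first character is trivial, equations~\eqref{eisenstein_eq1}--\eqref{eisenstein_eq2} give $c_{\ell p^r}\equiv c_{\ell p^{r-1}}\pmod{p^r}$ with no condition on $p$. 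You gesture at this Eisenstein series later, so you have the idea, but your first paragraph understates what is needed.

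Second, and more seriously, the transfer step for $f_2$ does \emph{not} go through ``via the same argument used for $f_1$''. Proposition~\ref{JV_Lemma_Extended} only transfers untwisted congruences $c_{\ell p^r}\equiv c_{\ell p^{r-1}}$; it says nothing about a twist by $\chi(p)$. The paper instead appeals to Beukers' result \cite[Proposition~3]{B}, which does handle the twisted form but requires the differential form to be a genuine power series, i.e.\ the sum must start at $n=1$ rather than $n=0$. This is exactly why the paper observes that $E_{k+2,\chi,\chi_0}(z)$ has vanishing constant term (the conductor of $\chi$ exceeds $1$), hence so does $f_2$, hence $b_{2,0}=0$ and \eqref{theorem_cor_eq} is a power-series identity to which Beukers applies. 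Your proposal omits this observation entirely, and your $U_p$ reformulation does not supply a substitute: without knowing $b_{2,0}=0$, the $n=0$ term $b_{2,0}\,dt/t$ obstructs any naive twisted analogue of Proposition~\ref{JV_Lemma_Extended}. This is the missing idea you need to add.
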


\begin{remark}
The conditions of Corollary \ref{theorem1_cor} are satisfied by the integers $N$, Dirichlet characters $\chi$ modulo $N$, and positive integers $k$ found in Table \ref{table_theorem2hypoth}.
\end{remark}

We obtain Example 1 for primes $p$ satisfying $\left(\frac{p}{5}\right)=1$ from Theorem \ref{theorem1} by choosing $t=\frac{\eta^6(5z)}{\eta^6(z)}$ and $f=\frac{\eta^5(z)}{\eta(5z)}$. We show how to obtain Example 1 for all primes $p$ in Section \ref{example1_rev}. We obtain Example 2 from Theorem \ref{theorem1} by choosing $t=\frac{\eta^6(2z)\eta^6(6z)}{\eta^6(z)\eta^6(3z)}$ and $f=\frac{\eta^4(z)\eta^4(3z)}{\eta^2(2z)\eta^2(6z)}$. See Section \ref{sect_examples1} for additional examples. See Table \ref{table_weight0etaproducts} for more examples of modular functions $t$ satisfying the hypotheses of Theorem \ref{theorem1}.

\begin{table}[!htbp]
\parbox{.5\linewidth}{
\centering
\caption{Levels, characters, weights, and primes for Theorem \ref{theorem1}}
\begin{tabular}{|c c l c|}
\hline
$N$& $\chi$ & Values of $k$ & Values of $p$ \\ \hline
2 & $\chi_0$&$0\mod 2$& $p\ne 2$
\\
3 & $\chi_0$&$0\mod 2$& $p\ne 3$
\\
 & $\left(\frac{\bullet}{3}\right)$& $1\mod 2$ &$\left(\frac{p}{3}\right)=1$
\\
4 & $\chi_0$&$0\mod 2$&$p\ne 2$
\\
& $\left(\frac{-4}{\bullet}\right)$& $1\mod 2$&$\left(\frac{-4}{p}\right)=1$
\\
5 &$\chi_0$& $0\mod 4$&$p\ne 5$
\\
&$\left(\frac{\bullet}{5}\right)$ & $2\mod 4$&$\left(\frac{p}{5}\right)=1$
\\
6 & $\chi_0$&$0\mod 2$&$p\ne 2,3$
\\
& $\left(\frac{\bullet}{3}\right)$& $1\mod 2$&$\left(\frac{p}{3}\right)=1$
\\
7 &$\chi_0$& $0\mod 6$&$p\ne 7$
\\
&$\left(\frac{\bullet}{7}\right)$ & $3\mod 6$&$\left(\frac{p}{7}\right)=1$
\\
8 & $\chi_0$&$0\mod 2$&$p\ne 2$
\\
& $\left(\frac{-4}{\bullet}\right)$& $1\mod 2$&$\left(\frac{-4}{p}\right)=1$
\\
9 & $\chi_0$&$0\mod 2$&$\left(\frac{p}{3}\right)=1$
\\
 & $\left(\frac{\bullet}{3}\right)$& $1\mod 2$&$\left(\frac{p}{3}\right)=1$
\\
10&$\chi_0$&$0\mod 4$&$p\ne 2,5$
\\
&$ \left(\frac{\bullet}{5}\right)$ & $0\mod 2$&$\left(\frac{p}{5}\right)=1$
\\
12 & $\chi_0$& $0\mod 2$&$p\ne 2,3$
\\
& $\left(\frac{\bullet}{3}\right)$& $1\mod 2$&$\left(\frac{p}{3}\right)=1$
\\
13 & $\chi_0$&$0\mod 12$&$p\ne 13$
\\
&$\left(\frac{\bullet}{13}\right)$ & $6\mod 12$&$\left(\frac{p}{13}\right)=1$
\\
16 &$\chi_0$&  $0\mod 2$&$\left(\frac{-4}{p}\right)=1$
\\
& $\left(\frac{-4}{\bullet}\right)$& $1\mod 2$&$\left(\frac{-4}{p}\right)=1$
\\
18 &$\chi_0$&  $0\mod 2$&$p\equiv 1\mod 6$
\\
& $ \left(\frac{\bullet}{3}\right)$& $1\mod 2$&$\left(\frac{p}{3}\right)=1$
\\
25 & $\chi_0$& $0\mod 4$&$p\equiv 1\mod 5$
\\
\hline
\end{tabular}
\label{table_theorem1hypoth}
}
\hfill
\parbox{.4\linewidth}{
\centering
\caption{Examples of weakly holomorphic modular forms of weight 0 for Theorem \ref{theorem1}}
\begin{tabular}{|c c|}
\hline
Subgroup & $t\in \widetilde{M}_0^!(\Gamma_0(N),\psi)$\\ \hline
$\Gamma_0(2)$&$\frac{\eta^{24}(2z)}{\eta^{24}(z)}$
\\
$\Gamma_0(3)$& $\frac{\eta^{12}(3z)}{\eta^{12}(z)}$
\\
$\Gamma_0(4)$& $\frac{\eta^8(z)\eta^{16}(4z)}{\eta^{24}(2z)}$
\\
$\Gamma_0(5)$& $\frac{\eta^6(5z)}{\eta^6(z)}$
\\
$\Gamma_0(6)$&$\left(\frac{\eta(2z)\eta(6z)}{\eta(z)\eta(3z)}\right)^6$
\\
$\Gamma_0(7)$& $\frac{\eta^4(7z)}{\eta^4(z)}$
\\
$\Gamma_0(8)$ & $\frac{\eta^2(2z)\eta^4(8z)}{\eta^4(z) \eta^2(4z)}$
\\
$\Gamma_0(9)$& $\frac{\eta^3(9z)}{\eta^3(z)}$
\\
$\Gamma_0(10)$ & $\left(\frac{\eta(5z)\eta(10z)}{\eta(z)\eta(2z)}\right)^2$
\\
$\Gamma_0(12)$& $\left(\frac{\eta(4z)\eta(12z)}{\eta(z)\eta(3z)} \right)^2$
\\
$\Gamma_0(13)$&$\frac{\eta^2(13z)}{\eta^2(z)}$
\\
$\Gamma_0(16)$& $\frac{\eta(2z)\eta^2(16z)}{\eta^2(z)\eta(8z)}$
\\
$\Gamma_0(18)$ & $\frac{\eta(2z)\eta(3z)\eta^2(18z)}{\eta^2(z)\eta(6z)\eta(9z)}$
\\
$\Gamma_0(25)$& $\frac{\eta(25z)}{\eta(z)}$
\\
\hline
\end{tabular}
\label{table_weight0etaproducts}
}
\end{table}

\begin{remark}
When $\Gamma_0(N)$ is genus zero and $N\ne 16,18,25$, all the weakly holomorphic modular functions $t$ that satisfy the conditions in Theorem \ref{theorem1} are products of the Dedekind eta functions $\eta(dz)$, $d|N$. To see this, we observe that $\frac{qdt}{tdq}$ is a holomorphic modular form of weight 2 on $\Gamma_0(N)$. If $N\ne 16,18,25$, then the space of modular forms of weight 2 on $\Gamma_0(N)$ is spanned by $E_2(z)-dE_2(dz)$, $d|N$. This implies that $t$ is a product of $\eta(dz)$ for $d|N$.
\end{remark}

Theorem \ref{theorem1} produces many infinite families of modular forms $f$ and modular functions $t$ such that the power series coefficients of $f$ in $t$ exhibit congruence relations. However, the zeros and poles of $t$ must be supported at the cusps. Theorem \ref{theorem2} will produce infinite families of such modular forms $f$ and modular functions $t$ without this restriction on $t$.

\begin{theorem}\label{theorem2}
From Table \ref{table_theorem2hypoth}, choose an integer $N$, a Dirichlet character $\chi$ modulo $N$, and a positive integer $k$. Choose any
\[
g=q+\sum_{n=2}^\infty{g_nq^n}\in \widetilde{M}_k(\Gamma_0(N),\chi)
\]
such that the zeros of $g$ are supported at the cusps. There exist
\[
f=1+\sum_{n=1}^\infty {f_nq^n} \in \widetilde{M}_k(\Gamma_0(N),\chi)
\]
and $t\in \widetilde{\mathcal{M}}_0(\Gamma_0(N))$ such that when we write
\[
f=\sum_{n=0}^\infty {b_n t^n},
\]
we have $g=f\cdot t$ and
\[
b_{\ell p^r}\equiv b_{\ell p^{r-1}}\mod p^r
\]
for all but finitely many primes $p$ satisfying $\chi(p)=1$ and for all $\ell,r\in\N$ .
\end{theorem}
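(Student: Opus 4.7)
The plan is to reduce Theorem~\ref{theorem2} to Corollary~\ref{theorem1_cor}. The hypothesis on $(N,\chi,k)$ drawn from Table~\ref{table_theorem2hypoth} is precisely the dimension condition appearing in that corollary. I would first choose an auxiliary weight-$0$ weakly holomorphic modular function $t_0 \in \widetilde{M}_0^!(\Gamma_0(N),\psi)$ with zeros and poles supported at cusps and $q$-expansion $t_0 = q + O(q^2)$ (available from Table~\ref{table_weight0etaproducts}). Corollary~\ref{theorem1_cor} then furnishes a non-zero form $f \in \widetilde{M}_k(\Gamma_0(N),\chi)$, which I normalize so that $f = 1 + O(q)$, whose $t_0$-expansion $f = \sum_n \gamma_n t_0^n$ satisfies $\gamma_{\ell p^r} \equiv \gamma_{\ell p^{r-1}} \pmod{p^r}$ for all but finitely many primes $p$ with $\chi(p)=1$.

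With this $f$ in hand, I would define $t := g/f$. Because $g$ and $f$ both lie in $\widetilde{M}_k(\Gamma_0(N),\chi)$, the ratio $t$ is a weight-$0$ meromorphic modular function with trivial character and rational $q$-expansion, so $t \in \widetilde{\mathcal{M}}_0(\Gamma_0(N))$. The expansions $g = q + O(q^2)$ and $f = 1 + O(q)$ force $t = q + O(q^2)$, and because the zeros of $g$ sit only at cusps, the interior poles of $t$ occur exactly at the zeros of $f$. By construction $g = f \cdot t$, and the formal power series $f = \sum_n b_n t^n$ is well-defined with $b_0 = 1$.

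It remains to establish $b_{\ell p^r} \equiv b_{\ell p^{r-1}} \pmod{p^r}$. The natural route is to transfer the known congruence for the $\gamma_n$ across the change of variable $t_0 \leadsto t$. Setting $u := t/t_0 = g/(f\,t_0)$ and expanding $u = 1 + u_1 t_0 + u_2 t_0^2 + \cdots$ as a power series in $t_0$, the identities $\sum_n \gamma_n t_0^n = f = \sum_n b_n (u t_0)^n$ give the triangular linear system
\[
\gamma_m = \sum_{n=0}^{m} b_n \bigl[\text{coeff.\ of } t_0^{m-n} \text{ in } u^n\bigr]
\]
expressing the $\gamma_n$ in terms of the $b_n$ and the coefficients of powers of $u$. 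An induction on $r$ using this triangular relation would then propagate the desired congruence from the $\gamma_n$ to the $b_n$, provided $u$ itself exhibits sufficient $p$-adic compatibility.

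The hard part, I expect, will be justifying this last transfer step, since Atkin--Swinnerton-Dyer-type congruences are not preserved under arbitrary changes of variable. A cleaner alternative is to re-run the $U_p$-argument underlying Theorem~\ref{theorem1} directly for the pair $(f,t)$ on the ordinary locus of $X_0(N) \otimes \Z_p$, using the hypothesis $\chi(p)=1$ to collapse the two-term Atkin recurrence to the single-term congruence. The new technical point, absent in Theorem~\ref{theorem1}, will be to verify that the interior poles of $t = g/f$ — that is, the zeros of $f$ — lie in the ordinary locus of $X_0(N) \bmod p$ for almost all $p$; this should follow from the fact that only finitely many primes $p$ can place a zero of the fixed form $f$ on the supersingular locus.
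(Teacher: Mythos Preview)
Your reduction to Corollary~\ref{theorem1_cor} has a genuine gap, and the ``hard part'' you flagged is not a technicality but the whole point. The form $f_1$ produced by Corollary~\ref{theorem1_cor} is tailored to the auxiliary function $t_0$: it satisfies $f_1\cdot\dfrac{q\,dt_0/dq}{t_0}\in\widetilde{E}_{k+2}(\Gamma_0(N),\chi)$. What you need, once you set $t=g/f_1$, is that $f_1\cdot\dfrac{q\,dt/dq}{t}$ be Eisenstein. These are two different weight-$(k+2)$ forms; their difference is $f_1\cdot\theta\!\log(t/t_0)=f_1\cdot\theta\!\log\bigl(g/(f_1t_0)\bigr)$, which has no reason to vanish or to be Eisenstein for a generic $g$. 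So your $f_1$ is simply the wrong $f$. Your proposed transfer via Proposition~\ref{JV_Lemma_Extended} cannot fix this: that proposition compares the coefficients of \emph{one} differential form written in two coordinate systems, whereas you are trying to pass from $f\,dt_0/t_0$ to $f\,dt/t$, which are genuinely different forms. And there is no $U_p$ or ordinary-locus argument underlying Theorem~\ref{theorem1}; the mechanism there is purely the divisor-sum formula for Eisenstein coefficients together with Proposition~\ref{JV_Lemma_Extended}.

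The paper instead builds $f$ from $g$ directly. Define the linear map
\[
\Phi_g\colon \widetilde{M}_k(\Gamma_0(N),\chi)\longrightarrow \widetilde{M}_{k+2}(\Gamma_0(N),\chi),\qquad \Phi_g(f)=\frac{\theta(f)\,g-\theta(g)\,f}{g},
\]
which lands in holomorphic forms because the zeros of $g$ are cuspidal. Its kernel is the line spanned by $g$, so by the Table~\ref{table_theorem2hypoth} dimension equality $\dim\widetilde{M}_k=\dim\widetilde{M}_{k+2}$ the image is a hyperplane; since $\dim\widetilde{E}_{k+2}=2$, one gets $\operatorname{im}(\Phi_g)\cap\widetilde{E}_{k+2}\neq 0$. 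Pick $f$ in the preimage, check via the explicit Eisenstein basis that $\nu_\infty(f)=0$, and normalize. Now set $t=g/f$; a one-line logarithmic-derivative computation gives $f\cdot\dfrac{q\,dt/dq}{t}=\pm\Phi_g(f)\in\widetilde{E}_{k+2}$, and Lemma~\ref{lemma_eisensteincongr} plus Proposition~\ref{JV_Lemma_Extended} (with $u=q$) finish exactly as in Theorem~\ref{theorem1}. The moral: you must let $g$ choose $f$, not an auxiliary $t_0$.
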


We obtain Example 1 for primes $p$ satisfying $\left(\frac{p}{5}\right)=1$ from Theorem \ref{theorem2} by choosing $g=\frac{\eta^5(5z)}{\eta(z)}$, choosing $f=\frac{\eta^5(z)}{\eta(5z)}$, and choosing $t=\frac{\eta^6(5z)}{\eta^6(z)}$. See Section \ref{sect_examples2} for additional examples. For examples of $g$ satisfying the hypotheses of Theorem \ref{theorem2}, choose $A$ and $B$ from Table \ref{table_etaproducts} and an integer $n$ such that $g=A\cdot B^n\in \widetilde{M}_k(\Gamma_0(N),\chi).$

\begin{table}[!htbp]
\parbox{.3\linewidth}{
\centering
\caption{Levels, characters, and weights for Theorem \ref{theorem2}}
\begin{tabular}{|c c l|}
\hline
N &$\chi$ &{Values of k}\\
\hline
2 &$\chi_0$&$0\mod 4$\\
3 & $\chi_0$& $0\mod 6$\\
& $ \left(\frac{\bullet}{3}\right) $&$3\mod 6$\\
5&$\chi_0$ & $0\mod 4$\\
& $ \left(\frac{\bullet}{5}\right) $ & $2\mod 4$\\
7 & $\chi_0$&$0\mod 6$\\
& $\left(\frac{\bullet}{7}\right)$&$3\mod 6$\\
13 & $\chi_0$&$0\mod 12$\\
& $\left(\frac{\bullet}{13}\right)$&$6\mod 12$\\
\hline
\end{tabular}
\label{table_theorem2hypoth}
}
\hfill
\parbox{.5\linewidth}{
\centering
\caption{Examples of modular forms \mbox{$g=AB^n$} for Theorem \ref{theorem2}}
\begin{tabular}{|c c c c|}
\hline
Subgroup & $\chi$& $A$ & $B$\\ \hline
$\Gamma_0(2)$ & $\chi_0$&$\frac{\eta(2z)^{16}}{\eta(z)^8}$& $\frac{\eta(z)^{16}}{\eta(2z)^8}$\\
$\Gamma_0(3)$&$\chi_0$&$\eta(3z)^6 \eta(z)^6$ &$\frac{\eta(z)^{18}}{\eta(3z)^6}$ \\
& $ \left(\frac{\bullet}{3}\right)$ & $ \frac{\eta(3z)^9}{\eta(z)^3}$ &$\frac{\eta(z)^{18}}{\eta(3z)^6}$\\
$\Gamma_0(5)$& $\chi_0$&$\eta(5z)^4\eta(z)^4$ & $ \frac{\eta(z)^{10}}{\eta(5z)^2}$\\
&$ \left(\frac{\bullet}{5}\right) $&$\frac{\eta(5z)^5}{\eta(z)}$&$ \frac{\eta(z)^{10}}{\eta(5z)^2}$\\
$\Gamma_0(7)$& $\chi_0$&$\eta(z)^{10}\eta(7z)^2$&$\frac{\eta(z)^{14}}{\eta(7z)^2}$ \\
&$\left(\frac{\bullet}{7}\right)$&$\eta(z)^3\eta(7z)^3$&$\frac{\eta(z)^{14}}{\eta(7z)^2}$\\
$\Gamma_0(13)$ &$\chi_0$& $\eta(z)^{24}$& $\frac{\eta(z)^{26}}{\eta(13z)^2}$  \\
& $\left(\frac{\bullet}{13}\right)$&$ \eta(z)^{11}\eta(13z)$& $\frac{\eta(z)^{26}}{\eta(13z)^2}$ \\
\hline
\end{tabular}
\label{table_etaproducts}
}
\end{table}

We begin with a few preliminaries in \mbox{Section \ref{sect_prelim}}, and we prove the primary tool for obtaining congruences in \mbox{Section \ref{sect_congrtool}}. In \mbox{Sections \ref{sect_proof1} and \ref{sect_proof2}}, we prove \mbox{Theorem \ref{theorem1}} and \mbox{Theorem \ref{theorem2}}. We give examples from these theorems in \mbox{Sections \ref{sect_examples1} and \ref{sect_examples2}} respectively. In Section \ref{sect_additional_examples}, we provide examples of other families of congruences obtained via the method of Theorem \ref{theorem1}. These additional examples were pointed out by the referee, whom we would like to especially thank. There are many other examples of families of modular forms and modular functions exhibiting similar properties; however, no single paper can be exhaustive. We have included the examples in this paper to show that congruences of coefficients of power series expansions of modular forms in modular functions are not uncommon.

\section{Preliminaries}\label{sect_prelim}
Let $\Ha$ denote the upper half-plane. If $f(z)$ is a meromorphic function with period 1 on $\Ha$ with Fourier expansion
\[
f(z)=\sum_{n=h}^\infty {f_n q^n},
\]
 then define
\[
\theta f(z)\mathrel{\mathop:}= \frac{1}{2\pi i} \frac{d}{dz} f(z)=\sum_{n=h}^\infty {n f_n q^n}.
\]
For a modular form or modular function $f$, let $\nu_\infty(f)$ be the order of vanishing of $f$ at $\infty$. Let $\left(\frac{\bullet}{p}\right)$ denote the Jacobi symbol and $\chi_0$ denote the principal Dirichlet character (whose modulus will be clear from context).

We can write an explicit basis for $E_{k}(\Gamma_0(N),\epsilon)$ where $\epsilon$ is a Dirichlet character modulo $N$. For further details, see \cite[Chapter 4]{DS}. Suppose $\chi,\psi$ are primitive characters with conductors $L$ and $R$ respectively. Let
\[
E_{k,\chi,\psi}(z)\mathrel{\mathop:}= c_0+\sum_{n=1}^\infty {\left( \sum_{m|n}{\psi(m)\chi(n/m)m^{k-1}}\right)q^n}
\]
where
\[ c_0 = \left\{ \begin{array}{ll}
 -\frac{B_{k,\psi}}{2k} & \mbox{if $L=1$},\\
0 & \mbox{if $L>1$}
\end{array} \right. \]
and $B_{k,\psi}$ is the generalized Bernoulli number associated with $\psi$. If $d$ is a positive integer and $k\geq 3$ is an integer such that $\chi(-1)\psi(-1)=(-1)^k$, then \mbox{$E_{k,\chi,\psi}(dz)\in M_{k}(\Gamma_0(RLd),\chi\psi)$.} Moreover, given $N$ and $\epsilon$, the series $E_{k,\chi,\psi}(dz)$ with $RLd|N$ and $\chi\psi=\epsilon$ form a basis for $E_{k}(\Gamma_0(N),\epsilon)$.

Suppose $\chi,\psi$ are primitive characters with conductors $L$ and $R$ respectively. Further suppose $d,k\in\N$ and $RLd|N$. Let $E_{k,\chi,\psi}(dz)=\sum{e_nq^n}$, let $p$ be any prime not dividing $d$, and let $\ell\in\N$. If $d|\ell$ then
\begin{equation}\label{eisenstein_eq1}
e_{\ell p^r}-\chi(p)e_{\ell p^{r-1}}=\sum_{m|\frac{\ell}{d}} {\chi\left(\tfrac{\ell}{md}\right)\psi(mp^r)m^{k-1}p^{r(k-1)}}\equiv 0\mod p^r.
\end{equation}
and if $d\nmid \ell$ then also
\begin{equation}\label{eisenstein_eq2}
e_{\ell p^r}-\chi(p)e_{\ell p^{r-1}}=0\equiv 0\mod p^r.
\end{equation}

\begin{remark}
From Table \ref{table_theorem1hypoth}, choose an integer $N$, a Dirichlet character $\epsilon$ modulo $N$, and a positive integer $k$. The $\C$-basis for $E_k(\Gamma_0(N),\epsilon)$ described above is a $\Q$-basis for $\widetilde{E}_k(\Gamma_0(N),\epsilon)$. The dimension of $M_k(\Gamma_0(N),\epsilon)$ is also the same as the dimension of $\widetilde{M}_k(\Gamma_0(N),\epsilon)$.
\end{remark}

\begin{lemma}\label{lemma_eisensteincongr}
Suppose $\chi_i,\psi_i$ are primitive characters with conductors $L_i$ and $R_i$ respectively. Further suppose $d_i,k,N\in\N$ and $R_iL_id_i|N$. Let $\{ E_{k+2,\chi_i,\psi_i}(d_i z)\}$ be the basis for $E_{k+2}(\Gamma_0(N),\chi)$ described above. Suppose that
\begin{equation*}
\sum_{n=0}^\infty {c_nq^n}= \sum_{i} {\frac{\alpha_i}{\beta_i}E_{k+2,\chi_i,\psi_i}(d_i z)}
\end{equation*}
where $\alpha_i,\beta_i\in\Z$ and $\gcd(\alpha_i,\beta_i)=1$ for all $i$. Then for any prime $p\nmid \prod_i {\beta_id_i}$ satisfying $\chi_i(p)=1$ for all $i$, we have
\[
c_{\ell p^r}\equiv c_{\ell p^{r-1}}\mod p^r
\]
for all $\ell,r\in\N$.
\end{lemma}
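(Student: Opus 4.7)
The plan is to derive the congruence termwise from the preceding equations \eqref{eisenstein_eq1} and \eqref{eisenstein_eq2}, using linearity of the expansion. Write $E_{k+2,\chi_i,\psi_i}(d_i z)=\sum_{n\geq 0} e_n^{(i)}q^n$ for each $i$, so that $c_n=\sum_i \frac{\alpha_i}{\beta_i}e_n^{(i)}$.

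Fix a prime $p$ with $p\nmid \prod_i \beta_i d_i$ and $\chi_i(p)=1$ for all $i$. Since $p\nmid d_i$, the hypotheses of \eqref{eisenstein_eq1} and \eqref{eisenstein_eq2} are met for each $i$; substituting $\chi_i(p)=1$, both equations collapse to
\[
e_{\ell p^r}^{(i)}-e_{\ell p^{r-1}}^{(i)}\equiv 0\pmod{p^r}
\]
in the ring of integers $\mathcal{O}$ of whatever cyclotomic field contains the values of $\chi_i$ and $\psi_i$ (in the $d_i\mid \ell$ case this is the explicit divisor formula in \eqref{eisenstein_eq1}, and in the $d_i\nmid \ell$ case it is an honest equality).

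Now I would multiply each $i$-th congruence by $\alpha_i/\beta_i$. The assumption $p\nmid \beta_i$ means $\beta_i$ is a unit in $\mathcal{O}_{(p)}$, so $\alpha_i/\beta_i\in \mathcal{O}_{(p)}$ and
\[
\frac{\alpha_i}{\beta_i}\bigl(e_{\ell p^r}^{(i)}-e_{\ell p^{r-1}}^{(i)}\bigr)\equiv 0\pmod{p^r}
\]
inside $\mathcal{O}_{(p)}$. Summing over $i$ yields $c_{\ell p^r}-c_{\ell p^{r-1}}\equiv 0\pmod{p^r}$ in $\mathcal{O}_{(p)}$. But by hypothesis $c_{\ell p^r}-c_{\ell p^{r-1}}\in \Q$, and $\Q\cap p^r\mathcal{O}_{(p)}=p^r\Z_{(p)}$, so the congruence descends to $\Z_{(p)}$, which is the desired statement.

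I do not anticipate a substantive obstacle: the content of the lemma is essentially the observation that $\chi_i(p)=1$ makes the recursion \eqref{eisenstein_eq1}–\eqref{eisenstein_eq2} into the claimed congruence for each summand, and that scaling by a $p$-adic unit and summing preserves the $p$-adic valuation. The only point requiring care is that the coefficients of the individual Eisenstein series a priori live in a cyclotomic ring rather than $\Z$, and one must confirm that the final rational congruence can be read off from the cyclotomic one; this is handled by the elementary fact $\Q\cap p^r\mathcal{O}_{(p)}=p^r\Z_{(p)}$ recalled above.
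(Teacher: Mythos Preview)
Your argument is correct and follows essentially the same route as the paper: apply \eqref{eisenstein_eq1}--\eqref{eisenstein_eq2} to each basis element to obtain $e_{\ell p^r}^{(i)}\equiv e_{\ell p^{r-1}}^{(i)}\pmod{p^r}$ when $p\nmid d_i$ and $\chi_i(p)=1$, then combine linearly using that each $\alpha_i/\beta_i$ is a $p$-adic unit. Your extra care in passing through a cyclotomic localization and back to $\Z_{(p)}$ is a mild refinement over the paper's terse version, but the underlying idea is identical.
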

\begin{proof}
Let $E_{k+2,\chi_i,\psi_i}(d_i z)=\sum{e_nq^n}$. From \eqref{eisenstein_eq1} and \eqref{eisenstein_eq2}, we have $e_{\ell p^r}\equiv e_{\ell p^{r-1}}\mod p^r$ for all $\ell,r\in\N$ and for primes $p\nmid d_i$ such that $\chi_i(p)=1$. Therefore, we have
\[
c_{\ell p^r}\equiv c_{\ell p^{r-1}}\mod p^r.
\]
for all primes $p\nmid \prod_i {\beta_id_i}$ satisfying $\chi_i(p)=1$ for all $i$ and for all $\ell,r\in\N$.
\end{proof}
\section{Tool For Obtaining Congruences}\label{sect_congrtool}
Following methods of \cite{B,JV,V}, we prove a result which allows us to extend certain congruences between related differential forms. Our result was proved by Jarvis and Verrill in the case where the sum begins with $n=1$ in \eqref{JV_Lemma_Ex_Eq1}. In \cite{OS1} it is mentioned that Stienstra has indicated that the result follows in the case where $n=0$ by formal group theory. We give a proof here for completeness. Define $\Z_{(p)}$ be the localization of $\Z$ at the prime $p$. For $x\in\Q$, define $\omega_p(x)$ to be the $p$-adic valuation of $x$.

\begin{proposition}\label{JV_Lemma_Extended}
Let $p$ be any prime. Let
\begin{equation}\label{t_power_series}
t= \sum_{n=1}^\infty {a_n u^{n}}\in\Z_{(p)}\left[\left[u\right]\right]
\end{equation}
be convergent in a neighborhood of $u=0$ in $\R$ and let $a_1$ be a unit in $\Z_{(p)}$. Suppose that in some neighborhood of $u=0$ in $\R$ we have an equality of differential forms given by
\begin{equation}\label{JV_Lemma_Ex_Eq1}
\sum_{n=0}^\infty{b_n t^{n-1}dt}=\sum_{n=0}^\infty {c_n u^{n-1}du}
\end{equation}
with $b_n,c_n\in\Z_{(p)}$ for all $n$. Then we have
\begin{equation}\label{JV_Lemma_Ex_Eq2}
b_{\ell p^r} \equiv b_{\ell p^{r-1}} (mod\ p^r)
\end{equation}
for all $\ell,r\in\N$ if and only if
\begin{equation}\label{JV_Lemma_Ex_Eq3}
c_{\ell p^r} \equiv c_{\ell p^{r-1}} (mod\ p^r)
\end{equation}
for all $\ell,r\in\N$.
\end{proposition}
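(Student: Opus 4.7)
The plan is to recast each congruence condition into a Dieudonn\'e--Dwork-type criterion on the corresponding formal antiderivative and then verify that the criterion is transported across the change of variable. Explicitly, set
\[
B(t)=\sum_{n\ge 1}\frac{b_n}{n}t^n,\qquad C(u)=\sum_{n\ge 1}\frac{c_n}{n}u^n.
\]
A standard equivalence (which I would either quote from the Stienstra--Beukers literature or verify directly using $v_p(p^k/k)\ge 1$) states that \eqref{JV_Lemma_Ex_Eq2} holds if and only if $pB(t)-B(t^p)\in p\Z_{(p)}[[t]]$, and analogously \eqref{JV_Lemma_Ex_Eq3} holds if and only if $pC(u)-C(u^p)\in p\Z_{(p)}[[u]]$. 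The proposition is thereby reduced to proving the equivalence of these two Dwork-type conditions from the hypothesis \eqref{JV_Lemma_Ex_Eq1}.

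First I would compare residues at $u=0$ in \eqref{JV_Lemma_Ex_Eq1}. Writing $t(u)=u\,\tilde{t}(u)$ with $\tilde{t}(0)=a_1$ a unit in $\Z_{(p)}$, the only contribution to the coefficient of $u^{-1}du$ on either side comes from the $n=0$ term, forcing $b_0=c_0$. Formal integration of \eqref{JV_Lemma_Ex_Eq1}, allowing a logarithmic primitive for this pole, then yields the identity
\[
C(u)\;=\;B(t(u))\;+\;b_0\,L(u),\qquad L(u):=\log\!\bigl(\tilde{t}(u)/a_1\bigr)\in u\,\Q[[u]].
\]
Assuming the Dwork condition for $B$, I decompose
\[
pC(u)-C(u^p)=\bigl[pB(t(u))-B(t(u)^p)\bigr]+\bigl[B(t(u)^p)-B(t(u^p))\bigr]+b_0\bigl[pL(u)-L(u^p)\bigr]
\]
and show each bracket lies in $p\Z_{(p)}[[u]]$. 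The first bracket is simply the Dwork relation for $B$ evaluated at $t(u)\in u\Z_{(p)}[[u]]$. The second uses the Frobenius congruence $t(u)^p\equiv t(u^p)\pmod p$ in $\Z_{(p)}[[u]]$ together with the identity $\frac{1}{n}\binom{n}{k}=\frac{1}{k}\binom{n-1}{k-1}$ to absorb the potentially troublesome $1/n$ factors in $B$ into the $p^k$ supplied by $A-B\in p\Z_{(p)}[[u]]$. For the third, $\tilde{t}(u)^p\equiv \tilde{t}(u^p)\pmod p$ combined with Fermat's congruence $a_1^{p-1}\equiv 1\pmod p$ gives $\tilde{t}(u)^p/\bigl(a_1^{p-1}\tilde{t}(u^p)\bigr)\in 1+p\Z_{(p)}[[u]]$, whose logarithm (which is exactly $pL(u)-L(u^p)$) lies in $p\Z_{(p)}[[u]]$ again by $v_p(p^k/k)\ge 1$.

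The reverse implication follows by symmetry: since $a_1\in\Z_{(p)}^\times$, the series $t(u)$ has a compositional inverse $u(t)\in\Z_{(p)}[[t]]$, and the same argument applies with the roles of $t$ and $u$ interchanged. I expect the main technical obstacle to be the handling of the $n=0$ terms, whose naive primitive $b_0\log t$ is not a genuine power series; the argument above circumvents this by showing that the nonintegral logarithmic correction $b_0 L(u)$ becomes $p$-integral after applying the Frobenius twist $X(u)\mapsto pX(u)-X(u^p)$, precisely because Fermat's little theorem applies to the unit $a_1$.
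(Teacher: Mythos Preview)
Your approach is correct and is essentially the same as the paper's: both rewrite the congruence condition as a Dieudonn\'e--Dwork criterion (the paper's Lemma in differential-form language, $\Omega(x)-\tfrac{1}{p}\Omega(x^p)=d\Psi$ with $\Psi\in\Z_{(p)}[[x]]$, is exactly your $pB(t)-B(t^p)\in p\Z_{(p)}[[t]]$ after formal integration), and then transport it through $u\mapsto t(u)$ via the same three-piece decomposition---the Dwork relation evaluated at $t(u)$, the passage from $t(u)^p$ to $t(u^p)$ using the Frobenius congruence, and the logarithmic correction coming from the $n=0$ term. Your treatment of the log term via $L(u)=\log(\tilde t(u)/a_1)$ and Fermat for the unit $a_1$ is the primitive of the paper's computation of $\dfrac{dt(u)}{t(u)}-\dfrac{dt(u^p)}{p\,t(u^p)}$; the only cosmetic difference is that the paper works with differential forms throughout (and invokes the real-analytic convergence hypothesis to justify taking $\log$), whereas you work purely formally with antiderivatives, which in fact shows the convergence hypothesis is not really needed.
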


We start with a short lemma whose proof follows \cite[Proposition 3.1]{V}.
\begin{lemma}\label{Power_Series_Lemma}
Let $p$ be a prime. Given a sequence $\{\alpha_n\}_{n=0}$ with $\alpha_n\in\Z_{(p)}$ and a differential form $\Omega(x)=\sum_{n=0}^\infty {\alpha_nx^{n-1}}dx$, the relation
\begin{equation*}
\alpha_n-\alpha_{n/p}\equiv 0 \mod p^{\omega_p(n)},\ \ \  n\in\Z
\end{equation*}
is equivalent to the relation
\begin{equation*}
\Omega(x)-\frac{1}{p}\Omega(x^p)=d\Psi_1(x)
\end{equation*}
for some $\Psi_1(x)\in\Z_{(p)}\left[\left[x\right]\right]$.
\end{lemma}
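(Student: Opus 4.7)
The plan is to get both directions simultaneously by reading off coefficients of the explicit expansion of $\Omega(x) - \frac{1}{p}\Omega(x^p)$. First I would substitute $x \mapsto x^p$ in $\Omega$, using $d(x^p) = p\,x^{p-1}\,dx$, to obtain
\[
\frac{1}{p}\Omega(x^p) = \sum_{n=0}^\infty \alpha_n\, x^{pn-1}\,dx.
\]
Subtracting this from $\Omega(x)$ and relabeling via $m = pn$ on the second sum gives
\[
\Omega(x) - \frac{1}{p}\Omega(x^p) = \sum_{m=0}^\infty (\alpha_m - \alpha_{m/p})\, x^{m-1}\,dx,
\]
where I adopt the convention $\alpha_{m/p} := 0$ whenever $p \nmid m$. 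A small but important bookkeeping point is that the $m=0$ term, which in isolation would be a formal Laurent pole $\alpha_0 x^{-1}\,dx$, cancels because $\alpha_0 - \alpha_0 = 0$, so the right-hand side is genuinely a formal power series times $dx$ and thus is eligible to be the exact differential of an element of $\Z_{(p)}[[x]]$.

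Next I would translate the equation $\Omega(x) - \frac{1}{p}\Omega(x^p) = d\Psi_1(x)$ with $\Psi_1 \in \Z_{(p)}[[x]]$ into a coefficient condition by formal termwise integration. Matching coefficients of $dx$, the only candidate antiderivative (up to an additive constant, which may be absorbed into $\Psi_1(0) \in \Z_{(p)}$) is
\[
\Psi_1(x) = \sum_{m \geq 1} \frac{\alpha_m - \alpha_{m/p}}{m}\, x^m.
\]
Its membership in $\Z_{(p)}[[x]]$ is equivalent to the $p$-integrality of every coefficient, i.e., $\omega_p(\alpha_m - \alpha_{m/p}) \geq \omega_p(m)$, which is precisely the stated congruence $\alpha_m - \alpha_{m/p} \equiv 0 \pmod{p^{\omega_p(m)}}$.

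This gives both directions at once: if the congruences hold, I define $\Psi_1$ by the displayed antiderivative; conversely, if $d\Psi_1$ equals the computed differential for some $\Psi_1 \in \Z_{(p)}[[x]]$, reading off coefficients recovers the congruences. There is no substantive obstacle here, but the one mildly delicate step is the correct handling of the potential pole at $m = 0$ together with the convention $\alpha_{m/p} = 0$ for $p \nmid m$; I would spell these out carefully up front so that the coefficient indexing stays transparent, after which the rest is purely formal manipulation.
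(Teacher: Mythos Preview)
Your proposal is correct and follows essentially the same approach as the paper: compute $\Omega(x)-\tfrac{1}{p}\Omega(x^p)=\sum_{n\ge 1}(\alpha_n-\alpha_{n/p})x^{n-1}\,dx$ after noting the $n=0$ term cancels, and then observe that this is $d\Psi_1$ with $\Psi_1\in\Z_{(p)}[[x]]$ if and only if each coefficient $(\alpha_n-\alpha_{n/p})/n$ is $p$-integral, i.e., $p^{\omega_p(n)}\mid(\alpha_n-\alpha_{n/p})$. Your write-up is a bit more explicit about the antiderivative and the $n=0$ bookkeeping than the paper's, but the argument is the same.
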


\begin{proof}
Since
\begin{equation*}
\Omega(x^p)=\sum_{n=0}^\infty{\alpha_n x^{p(n-1)}}d(x^{p})=p\sum_{n=0}^\infty{\alpha_n x^{pn-1}}dx=p\sum_{n=0}^\infty {\alpha_{n/p}x^{n-1}}dx,
\end{equation*}
we have
\begin{equation}\label{Power_Series_Eq}
\Omega(x)-\frac{1}{p}\Omega(x^p)=\sum_{n=0}^\infty {(\alpha_n-\alpha_{n/p})x^{n-1}}dx=\sum_{n=1}^\infty {(\alpha_n-\alpha_{n/p})x^{n-1}}dx.
\end{equation}
The lemma follows since the coefficients of the differential form in \eqref{Power_Series_Eq} are divisible by $p^{\omega_p(n)}$.
\end{proof}

\begin{proof}[Proof of Proposition \ref{JV_Lemma_Extended}]
Set
\begin{equation}\label{JV_Lemma_Ex_Eq4}
\Omega(t)=\sum_{n=0}^\infty{b_nt^{n-1}dt}.
\end{equation}
Suppose that \eqref{JV_Lemma_Ex_Eq2} holds true. By Lemma \ref{Power_Series_Lemma} we observe that the congruences in \eqref{JV_Lemma_Ex_Eq2} are equivalent to
\begin{equation}\label{JV_Lemma_Ex_Eq5}
\Omega(t)-\frac{1}{p}\Omega(t^p)=d\Psi_1(t),\ \ \ \ \Psi_1(t)\in\Z_{(p)}\left[\left[t\right]\right].
\end{equation}
By \eqref{JV_Lemma_Ex_Eq1} we have
\begin{equation*}
\Omega(t(u))=\sum_{n=0}^\infty {c_n u^{n-1}du}.
\end{equation*}
Once we show that
\begin{equation}\label{desired_eq1}
\Omega(t(u))-\Omega(t(u^p))=d\Psi(u)
\end{equation}
for some $\Psi(u)\in\Z_{(p)}\left[\left[u\right]\right]$ we may conclude by Lemma \ref{Power_Series_Lemma} that \eqref{JV_Lemma_Ex_Eq3} holds for all $\ell,r\in\N$. Since $a_1$ is a unit in $\Z_{(p)}$, we can invert the power series in \eqref{t_power_series} and use the same proof to show that \eqref{JV_Lemma_Ex_Eq3} implies \eqref{JV_Lemma_Ex_Eq2}. Therefore, we only need to prove that \eqref{JV_Lemma_Ex_Eq2} implies \eqref{JV_Lemma_Ex_Eq3}.

To prove \eqref{desired_eq1} we first observe
\begin{equation}\label{JV_Lemma_Ex_Eq6}
dt(u)=\sum_{n=1}^\infty {a_n n u^{n-1}du}
\end{equation}
and
\begin{equation}\label{JV_Lemma_Ex_Eq7}
dt(u^p)=\sum_{n=1}^\infty {a_n np u^{pn-1}du}.
\end{equation}
Also observe that
\begin{equation}\label{JV_Lemma_Ex_Eq8}
t(u)^{np}=t(u^p)^n+np\xi_n(u),
\end{equation}
where
\[
\xi_n(u)=\sum_{i=np}^\infty {e_iu^i}\in\Z_{(p)}\left[\left[u\right]\right].
\]
We see that $\sum_{n} {\xi_n(u)}$ is a well defined power series in $u$. Using \eqref{JV_Lemma_Ex_Eq4}, \eqref{JV_Lemma_Ex_Eq6}, \eqref{JV_Lemma_Ex_Eq7}, \eqref{JV_Lemma_Ex_Eq8} and noting that the $n=0$ terms cancel, we see that
\begin{align}\label{JV_Lemma_Ex_Eq9}
\notag d\Psi_1(t(u)) & =\Omega(t(u))-\frac{1}{p}\Omega(t(u)^p)\\ \notag
&=\sum_{n=1}^\infty {\frac{b_n}{n}{d(t(u)^n)}}-\sum_{n=1}^\infty {\frac{b_n}{np}d(t(u)^{pn})}
\\ \notag
&=\sum_{n=1}^\infty {\frac{b_n}{n}{d(t(u)^n)}}-\sum_{n=1}^\infty {b_n\cdot d\left(\frac{t(u^p)^n}{pn}+\xi_n(u) \right)}
\\
&=\Omega(t(u))-\frac{1}{p}\Omega(t(u^p))-
b_0\frac{dt(u)}{t(u)}+\frac{b_0}{p}\frac{dt(u^p)}{t(u^p)}-d\Psi_2(u)
\end{align}
for some $\Psi_2(u)\in\Z_{(p)}\left[\left[u\right]\right]$. Thus to prove \eqref{desired_eq1} we need only show that
\begin{equation}\label{dt_final_eq}
\frac{dt(u)}{t(u)}-\frac{dt(u^p)}{pt(u^p)}=d\Psi_3(u)
\end{equation}
for some $\Psi_3(u)\in\Z_{(p)}\left[\left[u\right]\right]$.

Since $t(u)$ and $t(u^p)$ are convergent in a neighborhood of $u=0$ in $\R$ and $\frac{t(u)^p}{t(u^p)}\neq 0$ in a neighborhood of $u=0$, we can take the $\log$ of $\frac{t(u)^p}{t(u^p)}$. Observe that
\begin{align*}
\frac{1}{p}\log\left(\frac{t(u)^p}{t(u^p)} \right)=\log\left(1+\tilde{t}(u)\right)-\frac{1}{p}\log(1+\tilde{t}(u^p))
\end{align*}
where $\tilde{t}(u)=\sum_{n=2}^\infty{\frac{a_n}{a_1}u^{n-1}}\in\Z_{(p)}\left[\left[ u\right]\right]$. Therefore,
\begin{align*}
\frac{1}{p}\log\left(\frac{t(u)^p}{t(u^p)} \right)=\sum_{n=1}^\infty{(-1)^{n+1}\frac{\tilde{t}(u)^n}{n}}-\frac{1}{p}\sum_{n=1}^\infty{(-1)^{n+1}\frac{\tilde{t}(u^p)^n}{n}}.
\end{align*}
Differentiating gives
\begin{align*}
\frac{dt(u)}{t(u)}-\frac{dt(u^p)}{pt(u^p)}&=\sum_{n=1}^\infty{(-1)^{n+1}\left(\frac{d(\tilde{t}(u)^n)}{n}-\frac{d(\tilde{t}(u^p)^n)}{np}\right)},
\end{align*}
which is a differential form with coefficients in $\Z_{(p)}$. By the same reasoning as in \eqref{JV_Lemma_Ex_Eq8} we have
\[
\frac{dt(u)}{t(u)}-\frac{dt(u^p)}{pt(u^p)}=\sum_{n=1}^\infty {(-1)^{n+1}\left(\frac{d(\tilde{t}(u)^n)}{n}-\frac{d(\tilde{t}(u)^{np}-np\tilde{\xi}_n(u))}{np} \right)}
\]
for some
\[
\tilde{\xi}_n(u)=\sum_{i=np}^\infty {\tilde{e}_i u^i}\in\Z_{(p)}\left[\left[u\right]\right].
\]
Hence,
\begin{align}\label{JV_Lemma_Ex_Eq10}
\notag \frac{dt(u)}{t(u)}-\frac{dt(u^p)}{pt(u^p)}&=\sum_{n=1}^\infty {(-1)^{n+1}\left(\frac{d(\tilde{t}(u)^n)}{n}-\frac{d(\tilde{t}(u)^{np})}{np}+d(\tilde{\xi}_n(u))\right)}
\\
&=\tilde{\Omega}(\tilde{t}(u))-\frac{1}{p}\tilde{\Omega}(\tilde{t}(u)^p)+d\Psi_4(u)
\end{align}
for some $\Psi_4(u)\in\Z_{(p)}\left[\left[u\right]\right]$ where
\[ \tilde{\Omega}(\tilde{t})=\sum_{n=1}^\infty{(-1)^{n+1}\tilde{t}^{n-1}d\tilde{t}}=\sum_{n=1}^\infty{d_n\tilde{t}^{n-1}d\tilde{t}}.
\]
Since $d_{\ell p^r}\equiv d_{\ell p^{r-1}}\mod p^r$ for all primes $p$ and for all $\ell,r\in\N$, we have
\begin{equation}\label{JV_Lemma_Ex_Eq11}
\tilde{\Omega}(\tilde{t(u)})-\frac{1}{p}\tilde{\Omega}(\tilde{t}^p(u))=d\Psi_5(\tilde{t}(u))
\end{equation}
for some $\Psi_5(\tilde{t})\in\Z_{(p)}\left[\left[\ \tilde{t}\ \right]\right]$ by Lemma \ref{Power_Series_Lemma}. Therefore, \eqref{dt_final_eq} holds by combining \eqref{JV_Lemma_Ex_Eq10} and \eqref{JV_Lemma_Ex_Eq11}.
\end{proof}

\section{Proof of Theorem \ref{theorem1}}\label{sect_proof1}
Choose an integer $N$, a Dirichlet character $\chi$ modulo $N$, and a positive integer $k$ such that $\widetilde{M_k}(\Gamma_0(N),\chi)$ satisfies \eqref{conditionstar}. Fix
\[
t=q+\sum_{n=2}^\infty{a_nq^n}\in \widetilde{M}_0(\Gamma_0(N),\psi),
\]
where $\psi$ is any Dirichlet character modulo $N$, such that the zeros and poles of $t$ are supported at the cusps (See Table \ref{table_weight0etaproducts} for examples of such $t$). We use multiplication by $\frac{q\frac{dt}{dq}}{t}$ as a linear transformation to find $f\in \widetilde{M}_k(\Gamma_0(N),\chi)$ such that the power series coefficients of $f$ in $t$ satisfy congruence relations.
\begin{remark}
Because the zeros and poles of $t$ are supported at the cusps, we have $\frac{q\frac{dt}{dq}}{t}\in \widetilde{M}_2(\Gamma_0(N))$. Therefore, multiplication by $\frac{q\frac{dt}{dq}}{t}$ is an injective linear transformation$\colon$
\begin{align*}
\notag \widetilde{M}_k(\Gamma_0(N),\chi) &\longmapsto  \widetilde{M}_{k+2}(\Gamma_0(N),\chi)
\\
f &\longmapsto  f\frac{q\frac{dt}{dq}}{t}.
\end{align*}
\end{remark}

\begin{proof}[Proof of Theorem \ref{theorem1}]
Let $N$, $\chi$, $k$, $t$, and $\psi$ be the same as above.Recall that $\widetilde{M_k}(\Gamma_0(N),\chi)$ satisfies \eqref{conditionstar}, which implies that
\begin{equation*}
\dim \widetilde{M}_k(\Gamma_0(N),\chi)+\dim \widetilde{E}_{k+2}(\Gamma_0(N),\chi)>\dim \widetilde{M}_{k+2}(\Gamma_0(N),\chi).
\end{equation*}
By a simple dimension argument, there exists non-zero
\[
f=\sum_{n=0}^\infty {f_nq^n}\in \widetilde{M}_k(\Gamma_0(N),\chi)
\]
such that
\begin{equation*}
f\frac{q\frac{dt}{dq}}{t}\in \widetilde{E}_{k+2}(\Gamma_0(N),\chi).
\end{equation*}
Using our explicit basis for $\widetilde{E}_{k+2}(\Gamma_0(N),\chi)$ from Section \ref{sect_prelim}, we have
\[
f\frac{q\frac{dt}{dq}}{t}=\sum_{n=0}^\infty{c_nq^n}=\sum_i{\frac{\alpha_i}{\beta_i}E_{k+2,\chi_i,\psi_i}(d_iz)}
\]
where $\alpha_i,\beta_i\in\Z$ and $\gcd(\alpha_i,\beta_i)=1$. Recall that $\chi_i$, $\psi_i$ are primitive Dirichlet characters modulo $L_i$,$R_i$. Furthermore, $L_iR_id_i|N$. There exists $D\in\N$ such that
\[
f,\ t,\ f\frac{q\frac{dt}{dq}}{t}\in\Z\left[\tfrac{1}{D}\right]\left[\left[q\right]\right].
\]
By Lemma \ref{lemma_eisensteincongr}, we have $c_{\ell {p^r}}\equiv c_{\ell p^{r-1}} \mod p^r$ for all $\ell, r\in\N$ and for all primes satisfying $p\nmid D\prod_i{\beta_i d_i}$ and $\chi_i(p)=1$ for all $i$. Write $f=\sum {b_nt^n}$.
We have
\begin{equation}\label{theorem1_Eq2}
\left[\sum_{n=0}^\infty {b_nt^n} \right]\frac{dt}{t}=f\frac{dt}{t}=\left[\sum_{n=0}^\infty{c_nq^n}\right]\frac{dq}{q}.
\end{equation}
Observe that $t$ is convergent in some neighborhood of $q=0$ in $\R$. Apply Proposition \ref{JV_Lemma_Extended} with $u=q$ to \eqref{theorem1_Eq2}. We obtain
\[
b_{\ell p^r}\equiv b_{\ell p^{r-1}}\mod p^r
\]
for all $\ell,r\in\N$ and for all primes satisfying $p\nmid D\prod_i{\beta_i d_i}$ and $\chi_i(p)=1$ for all $i$. We obtain the values for $p$ in Table \ref{table_theorem1hypoth} by considering the $\chi_i$ in the explicit basis for $\widetilde{E}_{k+2}(\Gamma_0(N),\chi)$.
\end{proof}

Theorem \ref{theorem1} and Table \ref{table_theorem1hypoth} give us families of congruences in a very general setting, but we can obtain better results in some settings.

\begin{remark}
In Table \ref{table_theorem1hypoth}, we state that congruences are obtained for values of $p$ such that $\left(\frac{p}{3}\right)=1$. However, $\dim \widetilde{M_k}(\Gamma_0(N))=1+k$. The Eisenstein subspace of weight $k$ has a three dimensional subspace spanned by $E_k(z),E_k(3z)$ and $E_k(9z)$, where $E_k(z)$ is the Eisenstein series of weight $k$ on $SL_2(\Z)$. By the argument of Theorem \ref{theorem1}, there exists a modular form of weight $k$ such that the corresponding congruence $b_{\ell p^r}\equiv b_{\ell p^{r-1}}\mod p^r$ holds for all but finitely many primes $p$, not just for primes satisfying $\left(\frac{p}{3}\right)=1$.
\end{remark}

If we assume a slightly stronger hypothesis in \mbox{Theorem \ref{theorem1}}, we obtain \mbox{Corollary \ref{theorem1_cor}}.

\begin{proof}[Proof of Corollary \ref{theorem1_cor}]
Choose an integer $N=2,3,5,7$ or $13$, a real-valued Dirichlet character $\chi$ modulo $N$, and a positive integer $k$ such that
\[
\dim\widetilde{M_k}(\Gamma_0(N),\chi)=\dim\widetilde{M_{k+2}} (\Gamma_0(N),\chi).
\]
By a simple dimension argument, there exists
\[
f_1=\sum_{n=0}^\infty{f_{1,n}q^n}\in\widetilde{M_k}(\Gamma_0(N),\chi)
\]
such that
\[
f_1\frac{q\frac{dt}{dq}}{t}=\sum_{n=0}^\infty{c_{1,n}q^n}=E_{k,\chi_0,\chi}(z).
\]
By \eqref{eisenstein_eq1} and \eqref{eisenstein_eq2} we have $c_{1,\ell p^r}\equiv c_{1,\ell p^{r-1}}\mod p^r$ for all $\ell,r\in\N$ and for all primes $p$. Write $f_1=\sum {b_{1,n}t^n}$. By the same argument as above, we obtain
\[
b_{1,\ell p^r}\equiv b_{1,\ell p^{r-1}}\mod p^r
\]
for all but finitely many primes $p$ and for all $\ell,r \in \N$ .

If $\chi\ne\chi_0$, we can find $f_2\in\widetilde{M_k}(\Gamma_0(N),\chi)$ such that
\[
f_2\frac{q\frac{dt}{dq}}{t}=\sum_{n=0}^\infty{c_{2,n}q^n}=E_{k,\chi,\chi_0}(z).
\]
By \eqref{eisenstein_eq1} and \eqref{eisenstein_eq2} we have $c_{2,\ell p^r}\equiv \chi(p)c_{2,\ell p^{r-1}}\mod p^r$ for all $\ell,r\in \N$ and for all primes $p$. Write $f_2=\sum {b_{2,n}t^n}$. If we inspect the formula for $E_{k,\chi,\chi_0}(z)$ in Section \ref{sect_prelim} we see it, and thus $f_2$, has no constant term since the conductor of $\chi$ is greater than 1. Therefore, we have
\begin{equation}\label{theorem_cor_eq}
\left[\sum_{n=1}^\infty {b_{2,n}t^n} \right]\frac{dt}{t}=f_2\frac{dt}{t}=E_{k,\chi,\chi_0}(z)=\left[\sum_{n=1}^\infty{c_nq^n}\right]\frac{dq}{q}.
\end{equation}
We obtain
\[
b_{2,\ell p^r}\equiv \chi(p) b_{2,\ell p^{r-1}} \mod p^r
\]
by \cite[Proposition 3]{B}. Beuker's result applies here because \eqref{theorem_cor_eq} involves power series instead of Laurent series.
\end{proof}

\section{Examples From Theorem \ref{theorem1}}\label{sect_examples1}
Though Theorem \ref{theorem1} only guarantees congruences for the power series coefficients of $f=\sum {b_nt^n}$ where $b_n\in\Q$, we have many examples where $b_n\in\Z$ for all $n$.
\subsection{Example for $\Gamma_0(2)$}\label{Gamma0(2)_ex}
Let
\[
f=\frac{\eta^{16}(z)}{\eta^8(2z)}\in \widetilde{M}_4(\Gamma_0(2))
\]
and
\[
t=\frac{g}{f}=\frac{\eta^{24}(2z)}{\eta^{24}(z)}.
\]
We have $f\frac{q\frac{dt}{dq}}{t}\in \widetilde{E}_6(\Gamma_0(2))$ with Fourier expansion
\[
f\frac{q\frac{dt}{dq}}{t}=\sum_{n=0}^\infty {c_nq^n}=1+8\sum_{n=1}^\infty {\sigma_5(n)q^n}-512\sum_{n=1}^\infty {\sigma_5(n)q^{2n}}.
\]
Write
\[
f=\sum_{n=0}^\infty {b_nt^n}=1 -16t+ 496t^2 -19456t^3 +860656t^4 -40950016t^5 +2046002176t^6 +\dots.
\]
We have
\begin{equation}\label{Gamma0(2)_eq}
b_{\ell p^r}\equiv b_{\ell p^{r-1}}\mod p^r
\end{equation}
for all primes $p\neq 2$ and all $\ell,r\in\N$ by Theorem \ref{theorem1}. Observing that $c_{\ell \cdot 2^r}\equiv c_{\ell \cdot 2^{r-1}}\mod 2^r$ for all $\ell,r \in \N$, we have \eqref{Gamma0(2)_eq} for $p=2$ as well.

\subsection{Examples for $\Gamma_0(3)$ with character $\left(\frac{\bullet}{3}\right)$}\label{Gamma0(3)_ex}
Let
\[
f_1=\frac{\eta^9(z)}{\eta^3(3z)},\ \ \ \ \ \ f_2=\frac{\eta^9(3z)}{\eta^9(z)},
\]
and
\[
t=\frac{g}{f}=\frac{\eta^{12}(3z)}{\eta^{12}(z)}.
\]
We have $f_1,f_2\in\widetilde{M}_3\left(\Gamma_0(3),\left(\frac{\bullet}{3}\right)\right)$ and $f_1\frac{q\frac{dt}{dq}}{t},f_2\frac{q\frac{dt}{dq}}{t}\in \widetilde{E}_5(\Gamma_0(3),\left(\frac{\bullet}{3}\right))$ with Fourier expansions
\[
f_1\frac{q\frac{dt}{dq}}{t}=\sum_{n=0}^\infty {c_{1,n}q^n}=3E_{5,\chi_0,\left(\frac{\bullet}{3}\right)}(z)
\]
and
\[
f_2\frac{q\frac{dt}{dq}}{t}=\sum_{n=0}^\infty {c_{2,n}q^n}=E_{5,\left(\frac{\bullet}{3}\right),\chi_0}(z)
\].
Write
\[
f_1=\sum_{n=0}^\infty{b_{1,n}t^n}=1-9t+135t^2-2439t^3+48519t^4-1023759t^5+22478121t^6+\dots.
\]
and
\[
f_2=\sum_{n=0}^\infty{b_{2,n}t^n}=t-9t^2+135t^3-2439t^4+48519t^5-1023759t^6+22478121t^7+\dots.
\]
We have
\begin{equation}\label{Gamma0(3)_eq}
b_{1,\ell p^r}\equiv b_{1,\ell p^{r-1}}\mod p^r
\end{equation}
for all primes $p$ satisfying $\left(\frac{p}{3}\right)=1$ and all $\ell,r\in\N$ by Theorem \ref{theorem1}. By \mbox{Corollary \ref{theorem1_cor}}, we actually have \eqref{Gamma0(3)_eq} for all primes $p$.
We have
\begin{equation}\label{Gamma0(3)_eq}
b_{2,\ell p^r}\equiv \left(\frac{p}{3}\right)b_{2,\ell p^{r-1}}\mod p^r
\end{equation}
for all primes $p$ and all $\ell,r\in\N$ by Corollary \ref{theorem1_cor}.

\subsection{Example for $\Gamma_0(4)$ with character $\left(\frac{-4}{\bullet}\right)$}
Let
\[
f=\Theta^2(z)=\left(\sum_{-\infty}^\infty{q^{n^2}} \right)^2,\ \ \ \ \ \ t_1=\frac{\eta^8(z) \eta^{16}(4z)}{\eta^{24}(2z)},\ \ \ \ \ \ t_2=\frac{\eta^{24}(z)\eta^{24}(4z)}{\eta^{48}(2z)}.
\]
We have $f\frac{q\frac{dt_1}{dq}}{t_1}, f\frac{q\frac{dt_2}{dq}}{t_2}\in \widetilde{E}_3(\Gamma_0(4),\left(\frac{-4}{\bullet}\right))$ with Fourier expansions
\[
f\frac{q\frac{dt_1}{dq}}{t_1}=\sum_{n=0}^\infty {c_nq^n}=-4E_{3,\chi_0,\left(\frac{-4}{\bullet}\right)}(z)=1 - 4q - 4q^2 + 32q^3 - 4q^4 - 104q^5 + 32q^6 + 192q^7 - 4q^8 + \dots
\]
and
\[
f\frac{q\frac{dt_2}{dq}}{t_2}=-4E_{3,\chi_0,\left(\frac{-4}{\bullet}\right)}(z)-16E_{3,\left(\frac{-4}{\bullet}\right),\chi_0}(z)=1 - 20q - 68q^2 - 96q^3 - 260q^4 - 520q^5 - 480q^6 +\dots.
\]
Write
\[
f=\sum_{n=0}^\infty{b_nt_1^n}=1+ 4t_1+ 36t_1^2 +400t_1^3+ 4900t_1^4+ 63504t_1^5+ 853776t_1^6+ 11778624t_1^7+\dots
\]
and
\[
f=\sum_{n=0}^\infty{\tilde{b}_nt_2^n}=1+ 4t_2+ 100t_2^2+ 3600t_2^3+ 152100t_2^4+ 7033104t_2^5+ 344622096t_2^6+ \dots.
\]
We have the congruence relations
\begin{equation}\label{Gamma0(4)_eq}
b_{\ell p^r}\equiv b_{\ell p^{r-1}}\mod p^r
\end{equation}
and
\[
\tilde{b}_{\ell p^r}\equiv \tilde{b}_{\ell p^{r-1}}\mod p^r
\]
for $p$ satisfying $\left(\frac{-4}{p}\right)=1$ and for all $\ell,r\in\N$ by Theorem \ref{theorem1}.
M. Somos \cite[A002894, A127776]{Sl} observed that $b_n=\binom{2n}{n}^2$ and $\tilde{b}_n=\left(\frac{2^n}{n!}\prod_{j=0}^{n-1}{4j+1}\right)^2$. Since \mbox{$c_{\ell  p^r}\equiv c_{\ell p^{r-1}}\mod p^r$} for all primes $p$ and for all $\ell,r \in \N$ by \eqref{eisenstein_eq1} and \eqref{eisenstein_eq2}, we actually have \eqref{Gamma0(4)_eq} for all primes $p$.

\subsection{Example for $\Gamma_0(5)$ with character $\left(\frac{\bullet}{5}\right)$: Example 1 Revisited}\label{example1_rev}
Let
\[
f=\frac{\eta^5(z)}{\eta(5z)}\in \widetilde{M}_2\left(\Gamma_0(5),\left(\frac{\bullet}{5}\right)\right)
\]
and
\[
t=\frac{\eta^6(5z)}{\eta^6(z)}.
\]
We have $f\frac{q\frac{dt}{dq}}{t}\in \widetilde{E}_4(\Gamma_0(5),\left(\frac{\bullet}{5}\right))$ with Fourier expansion
\[
f\frac{q\frac{dt}{dq}}{t}=\sum_{n=0}^\infty {c_nq^n}=E_{4,\chi_0,\left(\frac{\bullet}{5}\right)}.
\]
Write
\[
f=\sum_{n=0}^\infty {b_n t^n}=1 -5t+ 35t^2 -275t^3+ 2275t^4 -19255t^5+ 163925t^6 -1385725t^7+ \dots.
\]
We have
\begin{equation}\label{Gamma0(5)_eq1}
b_{\ell p^r}\equiv b_{\ell p^{r-1}}\mod p^r
\end{equation}
for all primes $p$ satisfying $\left(\frac{p}{5}\right)=1$ and for all $\ell,r\in\N$ by Theorem \ref{theorem1}. By Corollary \ref{theorem1_cor} we actually have \eqref{Gamma0(5)_eq1} for all primes $p$.

\subsection{Example for $\Gamma_0(8)$}
Let
\[
f=\frac{\eta(z)^4 \eta(4z)^{10}}{\eta^6(2z)\eta^4(8z)}\in \widetilde{M}_2(\Gamma_0(8))
\]
and
\[
t=\frac{\eta^2(2z)\eta^4(8z)}{\eta^4(z)\eta^2(4z)}.
\]
We have $f\frac{q\frac{dt}{dq}}{t}\in \widetilde{E}_4(\Gamma_0(8))$ with Fourier expansion
\[
f\frac{q\frac{dt}{dq}}{t}=\sum_{n=0}^\infty{c_nq^n}=1-16\sum_{n=1}^\infty {\sigma_3(n)q^{4n}}+256\sum_{n=1}^\infty {\sigma_3(n)q^{8n}}.
\]
Write
\[
f=\sum_{n=0}^\infty {b_n t^n}=1 -4t+ 24t^2 -160t^3+ 1112t^4 -7904t^5+ 57024t^6 -416000t^7+ \dots.
\]
We have
\begin{equation}\label{Gamma0(8)_eq}
b_{\ell p^r}\equiv b_{\ell p^{r-1}}\mod p^r
\end{equation}
for all primes $p\ne 2$ and for all $\ell,r\in\N$ by Theorem \ref{theorem1}. Observing that \mbox{$c_{\ell  \cdot 2^r}\equiv c_{\ell \cdot 2^{r-1}}\mod 2^r$} for all $\ell,r \in \N$, we have \eqref{Gamma0(8)_eq} for $p=2$ as well.

\section{Additional Families of Congruences Via the Method of Theorem \ref{theorem1}}\label{sect_additional_examples}
In Section \ref{sect_examples1}, we computed explicit examples of power series of a modular form $f$ in a modular function $t$. Here, we show how the method from Theorem \ref{theorem1} can be applied to several additional settings.

\subsection{Atkin-Lehner Eigenspaces}
The space $M_k(\Gamma_0(N),\chi)$ can be decomposed into a direct sum of Atkin-Lehner eigenspaces. This can yield additional families of congruences. For example, let $N=10$ and let $M_k(\Gamma_0(10),\epsilon_2,\epsilon_5)$ denote the subspaces of modular forms that are eigenfunctions for the Atkin-Lehner operators $w_2$ and $w_5$ with eigenvalues $\epsilon_2$ and $\epsilon_5$ respectively. We have the dimension formulas,
\begin{center}
\begin{tabular}{c}
$\dim M_k(\Gamma_0(10),+,+)=1-k\slash 2+\lfloor 3k \slash 8\rfloor+2\lfloor k\slash 4\rfloor$\\
$\dim M_k(\Gamma_0(10),+,-)=k\slash 2+\lfloor 3k\slash 8\rfloor-2\lfloor k\slash 4\rfloor$\\
$\dim M_k(\Gamma_0(10),-,+)=k\slash 2-\lfloor 3k\slash 8\rfloor+\lfloor k\slash 4\rfloor$\\
$\dim M_k(\Gamma_0(10),-,-)=k\slash 2-\lfloor 3k\slash 8\rfloor+\lfloor k\slash 4\rfloor$.\\
\end{tabular}
\end{center}
If we set
\[
t(z)=\left(\frac{\eta(2z)\eta(10z)}{\eta(z)\eta(5z)}\right)^4.
\]
Then $h=qdt/tdq\in M_2(\Gamma_0(10),-,+)$. Notice that for $k\equiv 2 \mod 4$ we have
\[
\dim M_k(\Gamma_0(10),+,-)=\dim M_{k+2}(\Gamma_0(10),-,-).
\]
This implies there is a modular form $f$ in $M_{k}(\Gamma_0(10),+,-)$ such that $fh\in E_{k+2}(\Gamma_0(10),-,-)$. Write $f=\sum {b_nt^n}$. By the methods of Theorem \ref{theorem1}, this will give us congruences of the form
\[
b_{\ell p^r}\equiv b_{\ell p^{r-1}}\mod p^r
\]
for all but finitely many primes $p$ subject to some character conditions and for all $\ell,r\in\N$.

\subsection{Hecke Eigenforms}
Hecke eigenforms which are cusp forms have congruence relations similar to those of the Eisenstein series. We can use these relation to obtain congruences similar to those in Theorem \ref{theorem1}. This has been previously studied by \cite{V}. Instead of using the tool proved in Section \ref{sect_congrtool}, we can apply \cite[Proposition 3]{B}. Although Beukers only proves his theorem for differential forms with coefficients in $\Z$ (or $\Z_p$), it is easy to see that a similar statement holds true when we look at differential forms with coefficients in a number field $F$.

For example, choose $N$ a positive integer, a Dirichlet character $\chi$ modulo $N$, and a positive integer $k$ from Table \ref{table_theorem2hypoth}. Therefore, $\dim M_k(\Gamma_0(N),\chi)= \dim M_{k+2}(\Gamma_0(N),\chi)$. Choose a normalized Hecke eigenform
\[
g=\sum_{n=1}^\infty {a_n q^n}\in M_{k+2}(\Gamma_0(N),\chi)
\]
and choose
\[
t=q+\sum_{n=2}^\infty{a_nq^n}\in {M}_0^!(\Gamma_0(N),\psi)
\]
such that the zeros and poles of $t$ are supported at the cusps, where $\psi$ is any Dirichlet character modulo $N$. Therefore, exists $f\in M_k(\Gamma_0(N),\chi)$ such that $f\frac{q\frac{dt}{dq}}{t}=g$.
We know that
\[
a_{\ell p^r}-a_pa_{\ell p^{r-1}}+p^{k+1}a_{\ell p^{r-2}}=0
\]
for all primes $p\nmid N$ and all $r,\ell\in\N$. Write $f=\sum {b_n t^n}$, and we obtain
\[
b_{\ell p^r}-a_p b_{\ell p^{r-1}}+p^{k+1}b_{\ell p^{r-2}}\equiv 0 \mod p^r
\]
for all but finitely many primes $p$ and all $r,\ell\in\N$.

Similarly, let $\sigma_0(N)$ denote the number of divisors of $N$. Suppose
\[
\dim M_{k+2}(\Gamma_0(N))<\dim M_k(\Gamma_0(N))+\sigma_0(N).
\]
Let
\[
g=\sum_{n=1}^\infty {a_n q^n}
\]
 be a normalized Hecke eigenform of weight $k+2$ on $SL_2(\Z)$. By a simple dimension argument, we can find $f\in M_k(\Gamma_0(N))$ such that
\[
f\frac{q\frac{dt}{dq}}{t}=\sum_{d|N} {\alpha_d g(dz)}.
\]
Write $f=\sum {b_nt^n}$. We have
\[
b_{\ell p^r}-a_p b_{\ell p^{r-1}}+p^{k+1}b_{\ell p^{r-2}}\equiv 0 \mod p^r
\]
for all but finitely many primes $p$ and all $r,\ell\in\N$.

\section{A Related Approach To Obtaining Families of Congruences}\label{sect_proof2}
In the proof of Theorem \ref{theorem1}, we use multiplication by $\frac{q\frac{dt}{dq}}{t}$ as a linear transformation to produce infinite families of modular forms $f$ whose power series coefficients in a weakly holomorphic modular function $t$ exhibit congruence relations. We can produce different infinite families of examples by using an alternate linear transformation. In Section \ref{sect_examples2}, we give several examples where the modular function $t$ is not weakly holomorphic.
\begin{definition}
Let $g,f\in M_k(\Gamma_0(N),\epsilon)$. We define

\begin{equation}\label{definition_phi}
\Phi_g(f)=\frac{\theta(f)g-\theta(g)f}{g}.
\end{equation}

\end{definition}

\begin{lemma}\label{proof2_lemma1}
Suppose $g,f\in M_k(\Gamma_0(N),\epsilon)$ have \mbox{$\nu_\infty(g)=\tilde{h}$} and \mbox{$\nu_\infty(f)=h$.} Further suppose that the zeros of $g$ on $\Ha$ are also zeros of $f$. We have \mbox{$\Phi_g(f)\in M_{k+2}(\Gamma_0(N),\epsilon)$} and
\[
\nu_\infty(\Phi_g(f)) = \left\{ \begin{array}{ll}
 h & \mbox{if $h\neq \tilde{h}$};\\
\geq h+1 & \mbox{if $h=\tilde{h}$}.
\end{array} \right.
\]
If $g$ is non-zero on $\Ha$, then
\[
\Phi_g\colon M_k(\Gamma_0(N),\epsilon)\rightarrow M_{k+2}(\Gamma_0(N),\epsilon)
\]
is a linear transformation and $\ker\Phi_g$ is spanned by $g$.
\end{lemma}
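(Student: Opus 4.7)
The plan is to verify the four claims in sequence: (i) the quotient $\Phi_g(f)$ is modular of weight $k+2$ with character $\epsilon$, (ii) it is holomorphic on $\Ha$, (iii) its order at $\infty$ matches the stated formula (and it is holomorphic at every cusp), and (iv) when $g$ is non-vanishing on $\Ha$, the map $\Phi_g$ is linear with kernel $\C g$.

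For (i), I would start from the standard quasi-modular transformation law
\[
\theta(f)|_{k+2}\gamma \;=\; \theta(f|_k\gamma) + \tfrac{kc}{2\pi i(cz+d)}\, f|_k\gamma,
\]
valid for any holomorphic $f$ of weight $k$ and $\gamma=\bigl(\begin{smallmatrix}a&b\\c&d\end{smallmatrix}\bigr)$. A direct computation then shows that the two anomaly terms cancel in the difference, so the Wronskian $W(f,g)\mathrel{\mathop:}=\theta(f)g - \theta(g)f$ satisfies $W(f,g)|_{2k+2}\gamma = W(f|_k\gamma,\ g|_k\gamma)$. For $\gamma\in\Gamma_0(N)$ this becomes $W(f,g)|_{2k+2}\gamma = \epsilon(d)^2 W(f,g)$, so dividing by $g$ (of weight $k$, character $\epsilon$) produces a function transforming with weight $k+2$ and character $\epsilon$.

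For (ii), the quotient $W(f,g)/g$ can have poles only at zeros of $g$ in $\Ha$. A local expansion at $z_0\in\Ha$ with $g$ vanishing to order $m$ and $f$ to order $m'\geq 1$ shows that $\theta(f)g$ and $\theta(g)f$ each have leading term of order $m+m'-1$, so their difference vanishes to order at least $m+m'-1 \geq m$, making $\Phi_g(f)$ holomorphic at $z_0$. For (iii), writing $f = f_h q^h + \cdots$ and $g = \tilde g_{\tilde h} q^{\tilde h} + \cdots$, the leading term of $W(f,g)$ is $(h-\tilde h)f_h\tilde g_{\tilde h}\, q^{h+\tilde h}$, so $\Phi_g(f) = (h-\tilde h)f_h\, q^h + O(q^{h+1})$, giving the stated order at $\infty$. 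The identity from (i) applied after slashing by $\gamma\in\textup{SL}_2(\Z)$ yields $\Phi_g(f)|_{k+2}\gamma = \Phi_{g|_k\gamma}(f|_k\gamma)$, and the same Fourier argument at each cusp confirms holomorphicity there.

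Finally, for (iv), when $g$ has no zero on $\Ha$ the common-zero hypothesis is vacuous, so $\Phi_g(f)$ is defined for every $f\in M_k(\Gamma_0(N),\epsilon)$, and linearity in $f$ is immediate from that of $\theta$. For the kernel, the identity $\Phi_g(f) = g\cdot\theta(f/g)$ shows that $\Phi_g(f)=0$ iff $f/g$ is constant, i.e.\ iff $f\in\C g$. The only delicate step is the modularity verification in (i), which requires careful bookkeeping of the $\theta$-anomaly; everything else reduces to a routine leading-term analysis.
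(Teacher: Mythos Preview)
Your proof is correct and follows the same approach as the paper, which is very terse (it simply invokes ``a calculation with the $q$-expansions of $f$ and $g$'' for the first part and notes that $\Phi_g(f)=0$ gives $g'/g=f'/f$ for the kernel). You have filled in the details the paper omits---the modularity via the anomaly cancellation in the Wronskian, the local holomorphicity check at zeros of $g$ on $\Ha$, and holomorphicity at the other cusps---and your kernel identity $\Phi_g(f)=g\,\theta(f/g)$ is equivalent to the paper's logarithmic-derivative argument.
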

\begin{proof}
If the zeros of $g$ are also zeros of $f$ on $\Ha$, then a calculation with the $q$-expansions of $f$ and $g$ gives us the first part of the lemma.

If $g$ is non-zero on $\Ha$ and if $\Phi_g(f)=0$, then $f'g -g'f=0$. Therefore, $\frac{g'}{g}=\frac{f'}{f}$.
\end{proof}

From Table \ref{table_theorem2hypoth}, fix an integer $N$, a character $\chi$ modulo $N$, and an integer $k>0$. Choose any
\[
g=q+\sum_{n=2}^\infty{g_nq^n}\in \widetilde{M}_k(\Gamma_0(N),\chi)
\]
such that the zeros of $g$ are supported at the cusps. In Lemma \ref{proof2_lemma2}, we show there exist $f\in \widetilde{M}_k(\Gamma_0(N),\chi)$ and $t\in \widetilde{\mathcal{M}}_0 (\Gamma_0(N))$ such that \mbox{$\Phi_g(f)=f\frac{q\frac{dt}{dq}}{t}\in \widetilde{E}_{k+2}(\Gamma_0(N),\chi)$.} We then prove Theorem \ref{theorem2} with the same method used for Theorem \ref{theorem1}.

\begin{lemma}\label{proof2_lemma2}
From Table \ref{table_theorem2hypoth}, choose an integer $N$, a character $\chi$ modulo $N$, and an integer $k>0$. Suppose we have $g\in \widetilde{M}_k(\Gamma_0(N),\chi)$ such that its zeros are supported at the cusps and it has Fourier expansion
\[
g=q+\sum_{n=2}^\infty{g_nq^n}.
\]
Then there exists $f\in \widetilde{M}_k(\Gamma_0(N),\chi)$ with Fourier expansion
\[
f=1+\sum_{n=1}^\infty{f_nq^n},
\]
and $\Phi_g(f)\in \widetilde{E}_{k+2}(\Gamma_0(N),\chi)$.
\end{lemma}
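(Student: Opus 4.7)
The plan is to exhibit the required $f$ via a dimension count on the linear transformation $\Phi_g$. Since the zeros of $g$ lie only at the cusps, $g$ is non-zero on $\Ha$, so by Lemma~\ref{proof2_lemma1} the map
\[
\Phi_g \colon \widetilde{M}_k(\Gamma_0(N),\chi) \longrightarrow \widetilde{M}_{k+2}(\Gamma_0(N),\chi)
\]
is linear with kernel $\langle g\rangle$. A case-by-case check with the standard dimension formulas \cite[Ch.~3]{DS} shows that $\dim \widetilde{M}_k = \dim \widetilde{M}_{k+2}$ for each row of Table~\ref{table_theorem2hypoth}, so $\mathrm{im}(\Phi_g)$ is a codimension-one subspace of $\widetilde{M}_{k+2}(\Gamma_0(N),\chi)$.

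Next I would pin down the linear condition cutting out this image. Expanding $\Phi_g(f) = ((\theta f)\,g - (\theta g)\,f)/g$ for $f = c_0 + c_1 q + \cdots$ and $g = q + g_2 q^2 + \cdots$ and truncating yields
\[
\Phi_g(f) \;=\; -c_0 \;-\; c_0 g_2\, q \;+\; O(q^2),
\]
so every $h = h_0 + h_1 q + \cdots$ in $\mathrm{im}(\Phi_g)$ satisfies $h_1 = g_2 h_0$. The functional $h \mapsto h_1 - g_2 h_0$ is non-trivial on $\widetilde{M}_{k+2}$ (for instance $g$ itself has $h_0 = 0$, $h_1 = 1$), so this relation is precisely the defining equation of the codimension-one hyperplane $\mathrm{im}(\Phi_g)$.

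It therefore suffices to exhibit a solution $h \in \widetilde{E}_{k+2}(\Gamma_0(N),\chi)$ of $h_1 = g_2 h_0$ with $h_0 \neq 0$: the corresponding $f := \Phi_g^{-1}(h)$ then has constant term $-h_0 \neq 0$, and can be rescaled to satisfy $f = 1 + O(q)$. For each row of Table~\ref{table_theorem2hypoth}, the Eisenstein space $\widetilde{E}_{k+2}$ has dimension at least two. One basis element $e^*$ (namely $E_{k+2,\chi_0,\chi}(z)$, or $E_{k+2,\chi_0,\chi_0}(z)$ in the trivial-character rows) has non-zero constant term thanks to the parity $\chi(-1) = (-1)^k$ encoded in the table; the remaining basis elements lie in the zero-constant-term subspace $\widetilde{E}_{k+2}^{\circ}$. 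Using the explicit divisor-sum formulas from Section~\ref{sect_prelim}, the functional $v \mapsto v_1$ is non-zero on $\widetilde{E}_{k+2}^{\circ}$ (at least one basis series has $\nu_\infty = 1$), so the equation $h_1 = g_2 h_0$ with $h_0 \neq 0$ is solvable by modifying $e^*$ by a suitable element of $\widetilde{E}_{k+2}^{\circ}$.

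The main obstacle is the uniformity check in the last step: verifying simultaneously for every entry of Table~\ref{table_theorem2hypoth} that $\dim \widetilde{E}_{k+2} \geq 2$ and that $v \mapsto v_1$ does not vanish identically on $\widetilde{E}_{k+2}^{\circ}$, or equivalently that $\mathrm{im}(\Phi_g)$ does not coincide with the hyperplane of Eisenstein forms with zero constant term. Both facts reduce to a finite computation with generalized Bernoulli numbers and divisor sums, but they must be carried out row by row of the table rather than deduced from a single generic argument.
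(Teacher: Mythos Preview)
Your argument contains a concrete error at the step where you claim the functional $h\mapsto h_1-g_2h_0$ is non-trivial on $\widetilde{M}_{k+2}(\Gamma_0(N),\chi)$ by citing $g$ as a witness. The form $g$ lies in $\widetilde{M}_k$, not $\widetilde{M}_{k+2}$, so it cannot serve. Without non-triviality you only know $\mathrm{im}(\Phi_g)\subseteq\{h:h_1=g_2h_0\}$, not equality, and then the Eisenstein $h$ you construct in the final paragraph is not guaranteed to lie in $\mathrm{im}(\Phi_g)$. The repair is easy and is essentially already present later in your write-up: the Eisenstein series $E_{k+2,\chi,\chi_0}(z)$ (or $E_{k+2,\chi_0,\chi_0}(z)-E_{k+2,\chi_0,\chi_0}(Nz)$ when $\chi=\chi_0$) is a weight-$(k+2)$ form with $h_0=0$ and $h_1=1$, so it witnesses non-triviality.

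Once this is patched, your route is correct but more elaborate than the paper's. The paper never computes the hyperplane equation. It simply notes that $\dim\widetilde{E}_{k+2}(\Gamma_0(N),\chi)=2$ and $\dim\mathrm{im}(\Phi_g)=\dim\widetilde{M}_{k+2}-1$, so the two subspaces of $\widetilde{M}_{k+2}$ must meet non-trivially; this produces $f$ with $\Phi_g(f)$ Eisenstein in one line. For the constant-term condition, the paper argues by contradiction: if $\nu_\infty(f)\geq 1$ then Lemma~\ref{proof2_lemma1} gives $\nu_\infty(\Phi_g(f))\geq 2$, yet inspection of the explicit two-element basis shows every nonzero element of $\widetilde{E}_{k+2}(\Gamma_0(N),\chi)$ vanishes to order at most $1$ at infinity. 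Your approach buys the explicit description of $\mathrm{im}(\Phi_g)$, which is interesting information, but at the cost of the extra verification that tripped you up; the paper's pure dimension count avoids that entirely.
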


\begin{proof}
From Table \ref{table_theorem2hypoth}, choose an integer $N$, a character $\chi$ modulo $N$, and an integer $k>0$. Observe that
\begin{equation}\label{proof2_lemma2_hypoth}
\dim \widetilde{M}_k(\Gamma_0(N),\chi)=\dim \widetilde{M}_{k+2}(\Gamma_0(N),\chi),
\end{equation}
and that
\begin{equation}\label{proof2_lemma2_eq1}
\dim (\widetilde{E}_{k+2}(\Gamma_0(N),\chi))=2
\end{equation}
by dimension formulas \cite[p. 85-92]{DS}. Combining Lemma \ref{proof2_lemma1} and \eqref{proof2_lemma2_hypoth}, we have
\[
\dim(\ker(\Phi_g))+\dim(\operatorname{im}(\Phi_g))=\dim \widetilde{M}_k(\Gamma_0(N),\chi)=\dim \widetilde{M}_{k+2}(\Gamma_0(N),\chi)
\]
and
\begin{equation}\label{proof2_lemma2_eq2}
\dim\left(\operatorname{im}(\Phi_g)\right)=\dim \widetilde{M}_{k+2}(\Gamma_0(N),\chi)-1.
\end{equation}
By \eqref{proof2_lemma2_eq1}, \eqref{proof2_lemma2_eq2}, and basic facts about vector spaces, we have
\[\dim \left(\operatorname{im}(\Phi_g)\cap \widetilde{E}_{k+2}(\Gamma_0(N),\chi)\right)\ge 1.\]
Therefore there exists a non-zero $f\in \widetilde{M}_k(\Gamma_0(N),\chi)$ such that
\[
\Phi_g(f)\in \widetilde{E}_{k+2}(\Gamma_0(N),\chi).
\]
It remains to show that $\nu_\infty(f)=0$.

Suppose by way of contradiction that $\nu_\infty(f)>0$. We have $\nu_\infty(\Phi_g(f))\geq 2$ by Lemma \ref{proof2_lemma1}. However, we have an explicit $\mathbb{C}$-basis for ${E}_{k+2}(\Gamma_0(N),\chi)$ from Section \ref{sect_prelim}. The basis elements described in \mbox{Section \ref{sect_prelim}} have coefficients in $\Q$ for our values of $N$ and $\chi$; thus, these elements form a $\mathbb{Q}$-basis for $\widetilde{E}_{k+2}(\Gamma_0(N),\chi)$.
If $\chi=\chi_0$, then
\[
E_{k+2,\chi_0,\chi_0}(z)=-\frac{B_{k+2,\chi_0}}{2k+4}+q+\dots
\]
and
\[
E_{k+2,\chi_0,\chi_0}(Nz)=-\frac{B_{k+2,\chi_0}}{2k+4}+q^N+\dots
\]
form a basis for $\widetilde{E}_{k+2}(\Gamma_0(N))$. If $\chi\ne \chi_0$, then
\[
E_{k+2,\chi_0,\chi\ }(z)=-\frac{B_{k+2,\chi_0}}{2k+4}+q+\dots
\]
and
\[
E_{k+2,\chi,\chi_0}(z)=q+\dots
\]
form a basis for $\widetilde{E}_{k+2}(\Gamma_0(N),\chi)$. Noting that $\frac{B_{k+2,\chi_0}}{2k+4}\neq 0$ and examining the above bases, we observe that any non-zero element of $\widetilde{E}_{k+2}(\Gamma_0(N),\chi)$ has order of vanishing at most 1 at infinity, which is a contradiction. Hence, there exists
\[
f=1+\sum_{n=1}^\infty{f_n q^n}\in \widetilde{M}_k(\Gamma_0(N),\chi)
\]
such that $\Phi_g(f)\in \widetilde{E}_{k+2}(\Gamma_0(N),\chi)$.
\end{proof}

Observe that by adding multiples of $g$ to $f$ in the previous lemma we can trivially find infinitely many $f\in \widetilde{M}_k(\Gamma_0(N),\chi)$ such that $\Phi_g(f)\in \widetilde{E}_{k+2}(\Gamma_0(N),\chi)$.

\begin{proof}[Proof of Theorem \ref{theorem2}]
From Table \ref{table_theorem2hypoth}, choose an integer $N$, a character $\chi$ modulo $N$, and an integer $k>0$. Choose any
\[
g=q+\sum_{n=2}^\infty{g_nq^n}\in \widetilde{M}_k(\Gamma_0(N),\chi)
\]
such that the zeros of $g$ are supported at the cusps. (For examples of such $g$, choose $A$ and $B$ from Table \ref{table_etaproducts} and an integer $n$ such that $g=A\cdot B^n\in \widetilde{M}_k(\Gamma_0(N),\chi).$)

By Lemma \ref{proof2_lemma2} there exists
\[
f=1+\sum {f_nq^n} \in \widetilde{M}_k(\Gamma_0(N),\chi)
\]
such that
\[
\Phi_g(f)\in \widetilde{E}_{k+2}(\Gamma_0(N),\chi).
\]
Let $t=\frac{g}{f}\in \widetilde{\mathcal{M}}_0 (\Gamma_0(N))$ and observe that $\Phi_g(f)=f\frac{q\frac{dt}{dq}}{t}$. Using our explicit basis for $\widetilde{E}_{k+2}(\Gamma_0(N),\chi)$ from Section \ref{sect_prelim}, we have
\[
\Phi_g(f)=f\frac{q\frac{dt}{dq}}{t}=\sum_{n=0}^\infty{c_nq^n}=\sum_{i=1}^2{\frac{\alpha_i}{\beta_i}E_{k+2,\chi_i,\psi_i}(d_iz)}
\]
where $\alpha_i,\beta_i\in\Z$ and $\gcd(\alpha_i,\beta_i)=1$. Recall that $\chi_i$, $\psi_i$ are primitive Dirichlet characters modulo $L_i$,$R_i$. Furthermore, $L_iR_id_i|N$. In either case, there exists $D\in \N$ such that  we have
\[
t=\sum_{n=1}^\infty {a_n q^n}\in\Z\left[\tfrac{1}{D}\right]\left[\left[q\right]\right],
\]
and
\[
\Phi_g(f)=f \frac{q\frac{dt}{dq}}{t}=\sum_{n=0}^\infty{c_nq^n}\in\Z\left[\tfrac{1}{D}\right]\left[\left[q\right]\right].
\]
By Lemma \ref{lemma_eisensteincongr}, we have $c_{\ell p^r}\equiv c_{\ell p^{r-1}}\mod p^r$ for all $\ell,r\in\N$ and for all primes $p$ satisfying $\chi(p)=1$ and $p\nmid Dd_1d_2\beta_1\beta_2$. Therefore,
\begin{equation}\label{proof2_eq}
\left[\sum_{n=0}^\infty {b_nt^n} \right]\frac{dt}{t}=f\frac{dt}{t}=\left[\sum_{n=0}^\infty{c_nq^n}\right]\frac{dq}{q}.
\end{equation}
Observe that $t$ is convergent in some neighborhood of $q=0$ in $\R$. Apply Proposition \ref{JV_Lemma_Extended} with $u=q$ to \eqref{proof2_eq}. We obtain
\[
b_{\ell p^r}\equiv b_{\ell p^{r-1}}\mod p^r
\]
for all $\ell,r\in\N$ and for all primes satisfying $\chi(p)=1$ and $p\nmid Dd_1d_2\beta_1\beta_2$.
\end{proof}

\section{Examples From Theorem \ref{theorem2}}\label{sect_examples2}
We obtain Example \ref{Gamma0(2)_ex} from Theorem \ref{theorem2} by choosing $g=\frac{\eta^{16}(2z)}{\eta^8(z)}$, $f=\frac{\eta^{16}(z)}{\eta^8(2z)}$, and \mbox{$t=\frac{\eta^{24}(2z)}{\eta^{24}(z)}$.} We obtain Example \ref{Gamma0(3)_ex} from Theorem \ref{theorem2} by choosing $g=\frac{\eta^9(3z)}{\eta^3(z)}$, $f=\frac{\eta^9(z)}{\eta^3(3z)}$, and $t=\frac{\eta^{12}(3z)}{\eta^{12}(z)}$.

\subsection{Example for SL$_2(\Z)$}
Let $g=\Delta=\eta^{24}(z)\in \widetilde{M}_{12}($SL$_2(\Z))$. We calculate that \mbox{$\Phi_g(E_{12})=E_{14}$.} Let $t=\frac{\Delta}{E_{12}}$. Write
\[
E_{12}=\sum_{n=0}^\infty {b_nt^n}=1+ \frac{65520}{691} t+ \frac{98146535760}{477481} t^2+ \frac{27376196366937600}{329939371} t^3+\dots.
\]
We have $b_{\ell p^r}\equiv b_{\ell p^{r-1}}\mod p^r$ for all primes $p\neq 691$ and for all $\ell,r\in\N$ by \mbox{Theorem \ref{theorem2}}.

Similarly, let $\tilde{g}=\Delta^2=\eta^{48}(z)\in \widetilde{M}_{24}($SL$_2(\Z))$. We calculate that
\[
\Phi_{\tilde g}(\Delta\cdot E_{12})=E_{14}\cdot \Delta=\sum_{n=1}^\infty{c_nq^n}.
\]
Let $\tilde{t}=\frac{\Delta^2}{\Delta\cdot E_{12}}=\frac{\Delta}{E_{12}}$. Write
\[
\Delta\cdot E_{12}=\sum_{n=0}^\infty {b_n \tilde{t}^n}=t+\frac{131040}{691}t^2+\frac{200585941920}{477481}t^3+\frac{67613514779865600}{329939371}t^4\dots.
\]
We have $b_{\ell p^r}-c_pb_{\ell p^{r-1}}+p^{25}b_{\ell p^{r-2}}\equiv 0\mod p^r$ for all but finitely many primes $p$ and for all $\ell,r\in\N$ by \cite[Proposition 3]{B}.

\subsection{Example for $\Gamma_0(5)$}
Let
\[
g=\eta^4(z)\eta^4(5z)\in \widetilde{M}_4(\Gamma_0(5))
\]
and
\[
f=1 + 10q + 80q^2 + 260q^3 + 680q^4 + 1390q^5 + 2320q^6 + 3180q^7 +\dots\in \widetilde{M}_4(\Gamma_0(5)).
\]
Further let
\[
t=\frac{g}{f}=q - 14q^2 + 62q^3 + 248q^4 - 4485q^5 + 17012q^6 + 99186q^7 +\dots.
\]
Then $\Phi_g(f)\in \widetilde{E}_6(\Gamma_0(5))$ and
\[
\Phi_g(f)=\sum_{n=0}^\infty {c_nq^n}=\frac{1}{126}E_6(z)+\frac{125}{126}E_6(5z)=1-4\sum_{n=1}^\infty {\sigma_4(n)q^n}-500\sum_{n=0}^\infty {\sigma_4(n)q^{5n}}.
\]
Write
\[
f=\sum_{n=0}^\infty {b_n t^n}=1+10t+220t^2+5800t^3+171400t^4+5428240t^5+180197200t^6\dots.
\]
We have
\begin{equation}\label{Gamma0(5)_eq2}
b_{\ell p^r}\equiv b_{\ell p^{r-1}}\mod p^r
\end{equation}
for all primes $p\nmid 5\cdot 126$ and for all $\ell,r\in\N$ by Theorem \ref{theorem2}. Observing that \mbox{$c_{\ell  p^r}\equiv c_{\ell p^{r-1}}\mod p^r$} for all primes $p$ and for all $\ell,r\in\N$, we actually have \eqref{Gamma0(5)_eq2} for all primes $p$.

\subsection{Example for $\Gamma_0(7)$ with character $\left(\frac{\bullet}{7}\right)$}
Let
\[
g=\eta(z)^3\eta(7z)^3,\ f=1 + 5q + 27q^2 + 56q^3 + 109q^4 + 168q^5 + 280q^6 +\dots \in \widetilde{M}_3\left(\Gamma_0(7),\left(\frac{\bullet}{7}\right)\right),
\]
then
\[
t=\frac{g}{f}=q - 8q^2 + 13q^3 + 100q^4 - 512q^5 - 164q^6 + 8684q^7 +\dots
\]
and $\Phi_g(f)\in \widetilde{E}_5(\Gamma_0(7),\left(\frac{\bullet}{7}\right))$ with Fourier expansion
\[
\Phi_g(f)=\sum_{n=0}^\infty{c_nq^n}=f\frac{q\frac{dt}{dq}}{t}=\frac{1}{16}E_{5,\chi_0,\left(\frac{\bullet}{7}\right)}(z)-\frac{49}{16}E_{5,\left(\frac{\bullet}{7}\right),\chi_0}(z).
\]
Write
\[
f=\sum_{n=0}^\infty{b_nt^n}=1+ 5t+ 67t^2 +1063t^3 +19091t^4+ 368623t^5+ 7475497t^6+ 157030949t^7+\dots.
\]
We have
\[
b_{\ell p^r}\equiv b_{\ell p^{r-1}}\mod p^r
\]
for $p\neq 2$ satisfying $\left(\frac{p}{7}\right)=1$ and for all $\ell,r\in\N$ by Theorem \ref{theorem2}.

\section{Concluding Remarks}
In our theorems, we have only examined modular forms on genus zero $\Gamma_0(N)$. However, many examples exist for modular forms on subgroups of higher genus (see \cite{OS1,V}). An infinite family of examples on a subgroup of higher genus would be interesting.

\section{Acknowledgements}
I would like extend special thanks to Professor Scott Ahlgren for advising me through the research and revisions of this paper. I would also like to thank the University of Illinois - Urbana/Champaign Math Department for providing a supportive environment and valuable resources. I would also like to thank the referee for his helpful comments and additional examples. Finally, all calculations for this paper were done using the computer algebra system SAGE \cite{S}.

\end{document}